\def\ker{\mathop{\rm ker}\nolimits}
\def\Im{\mathop{\rm Im}\nolimits}
\newtheoremstyle{plain}
{\topsep}
{\topsep}
{\itshape}
{}
{\bfseries}
{.}
{ }
{\thmname{#1}\thmnumber{ \textup{#2}}\thmnote{~: \textup{#3}}}
\theoremstyle{plain}
\newtheorem{theoreme}{Theorem}[section]
\newtheorem{corollary}[theoreme]{Corollary}
\newtheorem{proposition}[theoreme]{Proposition}
\newtheorem{lemma}[theoreme]{Lemma}
\theoremstyle{definition}
\theoremstyle{definition}
\newtheorem*{remark}{Remark}
\theoremstyle{definition}
\newtheorem*{remarks}{Remarks}
\theoremstyle{definition}
\newtheorem*{example}{Example}
\theoremstyle{definition}
\begin{document}

\title{Unstable points for torus actions on flag varieties}
\author{\textsc{Benoît Dejoncheere}\footnote{\texttt{dejoncheere@math.univ-lyon1.fr}}}
\date{}

\selectlanguage{english}

\maketitle

\textsc{Abstract :} {\small In this paper, we will look at actions on complex flag varieties $G/P$ of the torus $\hat{T}=\bigcap\limits_{\alpha\in\Delta\setminus\Delta_P}\ker(\alpha)$, and under reasonable assumptions, we will give a description of the set $X^{us}$ of unstable points for $\hat{T}$-linearized invertible sheaves. We will investigate the case where $P$ is a maximal parabolic subgroup, and show that $X^{us}$ can be written as a disjoint union of a Schubert variety and an opposite Schubert variety, and we deduce the vanishing of cohomology groups $H^i(Y,{\cal M})$ for invertible sheaves ${\cal M}$ on the quotient variety $Y$ for $i$ in a range given by the codimension of $X^{us}$.}

\tableofcontents

\section{Introduction}
If $G$ is a reductive complex algebraic group acting on a variety $X$, the quotient space $X/G$ cannot in general be equipped with a structure of algebraic variety. The framework of Geometric Invariant Theory is useful to construct rational maps $X\to Y$ and open subsets $U\subset X$ such that $U\to Y$ is a quotient map with good properties. We will recall in section \ref{sec2} known facts about GIT, and we will translate what is needed in our setting. In this paper, we will look at torus actions on flag varieties via GIT. More precisely, we will obtain under reasonable assumptions a description of the unstable locus on flag variety $G/P$ for ample invertible sheaves linearized by a torus canonically associated to $P$ with Theorem \ref{unstableflags}, and this description will be more accurate in the case of a maximal parabolic subgroup thanks to Proposition \ref{uspointsmaxpara}.\\
\indent The first motivation to this work is given by the work of H. Seppänen and V. Tsanov in \cite{ST}, which consists of the investigation of semisimple group $\hat{G}$ actions on full flag varieties $G/B$, and torus actions correspond to the case "dual" to the semisimple case. The main difference is that when looking at $\hat{G}$-actions on flag varieties, there much less possible linearizations, but more one-parameter subgroups than in the case of a torus action. The second motivation is given by the study of wonderful compactifications of some symmetric spaces done in \cite{De}. It appears that all these varieties can be described as GIT quotients by $\mathbb{C}^*$ of flag variety $G/P$ with $P$ maximal parabolic subgroup of $G$. This examples will be recalled in the section \ref{sec4-1}.\\
\indent Let $G$ be a complex semisimple algebraic group, $B$ be a Borel subgroup of $G$ and $T$ a maximal torus in $B$. In section \ref{sec3-1}, we will give an expression of the set of unstable points $X^{us}$ relatively to a $T$-linearized invertible sheaf on $X=G/B$ as a union of Schubert cells, and we will generalize this under nice conditions to $X=G/P$. Namely, to mimic the nice combinatorics of the Weyl group $W$ on the character lattice ${\cal X}$ of $T$ and to avoid fixed points issues, we will have to look at actions of the subtorus
\[
\hat{T}:=\bigcap\limits_{\alpha\in\Delta\setminus\Delta_P}\ker(\alpha)
\]
on $X$, and we will define a subgroup $\hat{W}\subset W^P$ which will have to play the same role than $W$ for the character lattice $\hat{\cal X}$ of $\hat{T}$.\\
\indent In the last section, we will look closer to the case where $P$ is a maximal parabolic subgroup of $G$, and we will refine our description to show that $X^{us}$ is actually a disjoint union of a Schubert variety $\overline{X_w}$ and of an opposite Schubert variety $w_{0,P}\overline{X_w'}$. We will also give an explicit description of $w$ and $w'$ that could occur in such a way. This description allows us to compute the codimension of $X^{us}$, that can be done when knowing only about $W^P$. Since $X$ is a flag variety, one obtain when the codimension $l$ of $X^{us}$ is at least two the cancellation of the cohomology groups $H^i(Y,{\cal M})$ for ${\cal M}$ invertible sheaf on the quotient $Y$ and for $1\leq i\leq l-1$.

\section{Generalities about GIT quotients}\label{sec2}
We will recall in this section some standard facts about GIT quotients and about their variations, following \cite{MFK}, \cite{DH} and \cite{BP}. Let $X$ be a complex irreducible algebraic variety acted on by a complex reductive group $G$. We say that $\pi : X\to Y$ is a categorical quotient if it is $G$-invariant and if any $G$-invariant morphism $X\to Z$ factors through $\pi$. We say that a categorical quotient is good if it sends disjoints $G$-invariant closed subsets of $X$ to disjoints closed subsets of $Y$, and if ${\cal O}_Y(U)={\cal O}_X(\pi^{-1}(U))^G$ for all open subset $U\subset Y$. We say that a good categorical quotient is a geometric quotient if moreover $\pi$ is surjective, and if its fibres are $G$-orbits.\\
\indent
Denote by $\textrm{Pic}^G(X)$ the set of $G$-linearized invertible sheaves on $X$ modulo isomorphism. Let ${\cal L}$ be such a sheaf. A point $x\in X$ is called semi-stable (with respect to ${\cal L}$) if there is an integer $n>0$ and an invariant section $s\in H^0(X,{\cal L}^{\otimes n})^G$ such that its non-zero locus $X_s$ is affine and $x\in X_s$. We will say that $x$ is stable if it is semi-stable, if we can choose such a $s$ such that $G$-orbits in $X_s$ are closed, and if its stabilizer $G_x$ is finite. A point $x\in X$ is said unstable if it is not semi-stable. The semi-stable (resp. stable, unstable) locus with respect to ${\cal L}$ is denoted by $X^{ss}({\cal L})$, (resp. $X^s({\cal L})$, $X^{us}({\cal L})$). There exists a good categorical quotient $\pi : X^{ss}({\cal L})\to Y({\cal L})$ whose restriction from $X^{s}({\cal L})$ to its image is a geometric quotient.\\
\indent
We will now focus in the case where $X$ is a projective variety, and ${\cal L}$ is ample. It follows that the algebra $\bigoplus\limits_{n\geq 0}H^0(X,{\cal L}^{\otimes n})$ is finitely generated, and $G$ being reductive, so is its algebra of $G$-invariants. We have an isomorphism
\[
Y({\cal L})=\textrm{Proj}(\bigoplus\limits_{n\geq 0}H^0(X,{\cal L}^{\otimes n})^G)
\]
We also have a good characterization of (semi-)stability thanks to the Hilbert-Mumford criterion, that we are going to recall. For all $x\in X$ and for all one-parameter subgroup $\lambda : \mathbb{C}^*\to G$, let $x_0=\lim\limits_{t\to 0}\lambda(t).x$, and define $\mu^{\cal L}(x,\lambda)$ as the integer such that $\lambda(\mathbb{C}^*)$ acts on the fibre ${\cal L}_{x_0}$ with the character $t\mapsto t^{\mu^{\cal L}(x,{\lambda})}$. The Hilbert-Mumford criterion can then be stated as :
\begin{proposition}[Hilbert-Mumford Criterion]
Let ${\cal L}$ be a $G$-linearized ample invertible sheaf on $X$, and $x\in X$. Then
\begin{enumerate}
\item[(1)] $x\in X^{ss}({\cal L})\Leftrightarrow \forall \lambda : \mathbb{C}^*\to G, \mu^{\cal L}(x,\lambda)\leq 0$ ;
\item[(2)] $x\in X^{s}({\cal L})\Leftrightarrow \forall \lambda : \mathbb{C}^*\to G, \mu^{\cal L}(x,\lambda)< 0$.
\end{enumerate}
\end{proposition}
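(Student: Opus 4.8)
The plan is to reduce both equivalences to the basic lemma on orbit closures in a linear representation, the numerical function $\mu^{\cal L}$ serving as the exact bookkeeping of $\lambda$-weights. Since ${\cal L}$ is ample and $X$ projective, after replacing ${\cal L}$ by a power ${\cal L}^{\otimes n}$ (which scales $\mu^{\cal L}(x,\lambda)$ by $n$ and changes neither $X^{ss}$ nor $X^s$) I may assume ${\cal L}$ very ample, so that the $G$-linearisation produces a $G$-equivariant closed embedding $X\hookrightarrow\mathbb{P}(V)$ with $V=H^0(X,{\cal L})^*$ a finite-dimensional $G$-module and ${\cal L}={\cal O}(1)|_X$. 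First I would pass to the affine cone: an invariant section of ${\cal L}^{\otimes m}$ is the restriction of a $G$-invariant homogeneous polynomial of degree $m$ on $V$, and for an ample ${\cal L}$ on a projective $X$ the nonvanishing locus of a section is automatically affine, so the affineness clause in the definition of semistability is free. Lifting $x$ to $\hat{x}\in V\setminus\{0\}$, one gets that $x\in X^{ss}({\cal L})$ iff some positive-degree invariant polynomial is nonzero at $\hat{x}$; as $G$ is reductive, disjoint closed invariant subsets of the affine space $V$ are separated by invariants, so that
\[
x\in X^{ss}({\cal L})\iff 0\notin\overline{G\cdot\hat{x}}.
\]

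Next I would make the weight dictionary explicit. Fixing $\lambda$ and decomposing $\hat{x}=\sum_r\hat{x}_r$ into $\lambda$-eigenvectors of weight $r$, the point $x_0=\lim_{t\to0}\lambda(t)\cdot x$ is the class of the lowest-weight component $\hat{x}_{r_{\min}}$, and unwinding the linearisation identifies $\mu^{\cal L}(x,\lambda)$, with the sign convention of the statement, with the extremal weight $r_{\min}=\min\{r:\hat{x}_r\neq0\}$. Thus $\mu^{\cal L}(x,\lambda)>0$ means exactly that $\lambda(t)\cdot\hat{x}\to0$ as $t\to0$, $\mu^{\cal L}(x,\lambda)=0$ that $\lambda(t)\cdot\hat{x}$ tends to a nonzero limit, and $\mu^{\cal L}(x,\lambda)<0$ that $\lambda(t)\cdot\hat{x}$ leaves every bounded subset of $V$.

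The engine is the fundamental lemma: $0\in\overline{G\cdot\hat{x}}$ if and only if there is a one-parameter subgroup $\lambda$ with $\lim_{t\to0}\lambda(t)\cdot\hat{x}=0$. Granting it, part (1) is immediate: $x\notin X^{ss}({\cal L})\iff 0\in\overline{G\cdot\hat{x}}\iff\exists\lambda,\ \lambda(t)\cdot\hat{x}\to0\iff\exists\lambda,\ \mu^{\cal L}(x,\lambda)>0$, i.e. $x\in X^{ss}({\cal L})\iff\mu^{\cal L}(x,\lambda)\leq0$ for all $\lambda$. For part (2) the same dictionary, applied to $\lambda$ and to $\lambda^{-1}$, shows that requiring $\mu^{\cal L}(x,\lambda)<0$ for every nontrivial $\lambda$ amounts to demanding that $\hat{x}$ carries both strictly positive and strictly negative weights under every such $\lambda$; this forbids any one-parameter degeneration of $\hat{x}$ towards the boundary in either direction and rules out one-parameter subgroups in the stabiliser, which by the fundamental lemma is equivalent to $G\cdot\hat{x}$ being closed with finite stabiliser, that is to $x\in X^s({\cal L})$.

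The main obstacle is the nontrivial implication of the fundamental lemma, namely manufacturing a genuine one-parameter subgroup out of the purely set-theoretic hypothesis $0\in\overline{G\cdot\hat{x}}$; this is exactly where reductivity of $G$ is indispensable. I would argue through the valuative criterion: the point $0$ in the closure yields a morphism from the spectrum of a discrete valuation ring into $\overline{G\cdot\hat{x}}$ carrying the generic point into the orbit, i.e. an element $g(s)\in G(\mathbb{C}((s)))$ with $g(s)\cdot\hat{x}\to0$; the Cartan decomposition in the formal loop group then writes $g(s)=k_1(s)\,\lambda(s)\,k_2(s)$ with $k_i(s)\in G(\mathbb{C}[[s]])$ and $\lambda$ an honest one-parameter subgroup, and replacing $g(s)$ by $\lambda(s)$ does not destroy the degeneration to $0$. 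The remaining points --- fixing the sign normalisation so that the inequalities come out as $\leq0$ and $<0$ rather than reversed, and the compatibility of $\mu^{\cal L}$ with passage to powers of ${\cal L}$ --- are routine verifications.
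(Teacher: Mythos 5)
The paper offers no proof of this proposition --- it is quoted as the classical Hilbert--Mumford criterion from \cite{MFK} --- so your attempt can only be measured against the standard argument. Your part (1) follows that argument correctly: reduction to a very ample power, passage to the affine cone, identification of $\mu^{\cal L}(x,\lambda)$ with the extremal $\lambda$-weight $r_{\min}$ of a lift $\hat x$ (your sign choice is the one the paper actually uses in its later computations, even though its literal definition of $\mu^{\cal L}$ via the fibre of ${\cal L}$ at $x_0$ would produce the opposite sign), and the fundamental lemma via the valuative criterion plus the Cartan--Iwahori decomposition of $G(\mathbb{C}((s)))$. One imprecision there: after writing $g(s)=k_1(s)\lambda(s)k_2(s)$ you cannot simply ``replace $g(s)$ by $\lambda(s)$''; what degenerates to $0$ is $\lambda(s)k_2(0)\cdot\hat x$, so the destabilizing one-parameter subgroup is the conjugate $k_2(0)^{-1}\lambda k_2(0)$. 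Routine, but it should be said.

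The genuine gap is in part (2). The fundamental lemma you invoke only detects whether $0$ lies in $\overline{G\cdot\hat x}$; it says nothing about other boundary points of the orbit, nor about the stabiliser, so your sentence ``this forbids any one-parameter degeneration \dots and rules out one-parameter subgroups in the stabiliser, which by the fundamental lemma is equivalent to $G\cdot\hat x$ being closed with finite stabiliser'' does not follow from it. Two concrete problems: (i) to exclude a non-closed orbit you need the stronger Hilbert--Mumford/Kempf statement that some one-parameter subgroup drives $\hat x$ into the unique closed orbit of $\overline{G\cdot\hat x}$, not merely to $0$; (ii) ``no one-parameter subgroup in the stabiliser'' does not imply ``finite stabiliser'' --- a unipotent stabiliser contains no copy of $\mathbb{C}^*$ --- so you would additionally need Matsushima's theorem (the stabiliser of a point with closed orbit is reductive) or a substitute. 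The clean repair is to characterise stability as properness of the orbit map $G\to V$, $g\mapsto g\cdot\hat x$, and to test properness by the valuative criterion combined with the same Iwahori decomposition: the map fails to be proper iff there is a nontrivial $\lambda$ for which $\lim_{t\to 0}\lambda(t)\cdot\hat x$ exists in $V$, i.e.\ iff $\mu^{\cal L}(x,\lambda)\geq 0$ for some nontrivial $\lambda$. With that replacement your outline becomes the standard proof.
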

A natural question arising is the following : how to compare different quotients with respect to two different $G$-linearized ample invertible sheaves ${\cal L}$ and ${\cal L'}$ ? These relations are encoded by the so-called $G$-ample cone, of which we are going to recall the definition. We will say that ${\cal L}\in \textrm{Pic}^G(X)$ is $G$-homologically trivial if its first Chern class is trivial, and if it has a trivial $G$-linearization, and we will denote by $\textrm{Pic}^G(X)_0$ the subgroup of $G$-homologically trivial ${\cal L}\in \textrm{Pic}^G(X)$ (for such a ${\cal L}$, one can define a trivial $G$-action on the total space $L$ of ${\cal L}$ by $g.(x,f)=(g.x,f)$, \cite{DH} 2.3.3). Let $\textrm{NS}^G(X):=\textrm{Pic}^G(X)/\textrm{Pic}^G(X)_0$ be the $G$-Néron-Severi group.
\begin{remarks}
\item[1)] Moding out by $\textrm{Pic}^G(X)_0$ is not a problem when looking at GIT quotients : if ${\cal L}\in\textrm{Pic}^G(X)_0$ is ample, then for all $x\in X$ and for all one-parameter subgroup $\lambda : \mathbb{C}^*\to G$, $\mu^{\cal L}(x,\lambda)=0$, hence homologically equivalent $G$-linearized ample invertible sheaves share the same (semi-)stable points. Moreover, if a class in $\textrm{NS}^G(X)$ can be represented by an ample ${\cal L}$, then all invertible sheaves in this class are ample (this follows from Kleiman ampleness criterion, cf. \cite{La} 1.4.26).
\item[2)] We should mention that in the case we will be interested in later, namely for flag varieties, $\textrm{Pic}(X)_0$ (hence $\textrm{Pic}^G(X)_0$) will be trivial. This follows from  $H^1(X,{\cal O}_X)=0$, which says that the first Chern class map $\textrm{Pic}(X)\to H^2(X,\mathbb{Z})$ is injective.
\end{remarks}
For lighter denominations, we will simply say that ${\cal L}\in \textrm{NS}^G(X)$ is $G$-ample if it can be represented by a $G$-linearized ample invertible sheaf, and if it has semi-stable points. Now one should notice that if ${\cal L}$ and ${\cal M}$ are $G$-ample, then so is ${\cal L}\otimes{\cal M}$, hence $G$-ample invertible sheaves span a semigroup in $\textrm{NS}^G(X)$. The $G$-ample cone is then defined as the convex cone spanned by $G$-ample sheaves in $\textrm{NS}^G(X)_{\mathbb{R}}:=\textrm{NS}^G(X)\otimes \mathbb{R}$, and will be denoted by $C^G(X)$.
\begin{remark}
It still makes sense to speak about (semi-)stability for $x\in X$ wih respect to $l\in C^G(X)$. In \cite{DH}, the authors constructed from the functions $\mu^{\textbullet}(x,\lambda)$ a unique \textit{continuous} function $M^{\textbullet}(x)$ (which has to be thought as a weighted supremum of $\mu^{\textbullet}(x,\lambda)$, with weights depending on $\lambda$ only) such that
\begin{itemize}
\item[(1)] for ample ${\cal L}\in \textrm{NS}^G(X)$ and $x\in X$, $x\in X^{ss}({\cal L})\Leftrightarrow M^{\cal L}(x)\leq 0$ and $x\in X^{s}({\cal L})\Leftrightarrow M^{\cal L}(x)<0$ ;
\item[(2)] for $l,l'\in C^G(X)$ and $x\in X$, $M^{\lambda l}(x)=\lambda M^l(x)$ and $M^{l+l'}(x)\leq M^{l}(x)+M^{l'}(x)$ ;
\end{itemize}
and it is easy to extend Hilbert-Mumford criterion to the whole cone $C^G(X)$ thanks to these properties.
\end{remark}
We will say that $H\subset C^G(X)$ is a wall if there exists a $x\in X$ with $\dim G_x>0$ such that $H=\{l\in C^G(X)|M^l(x)=0\}$. A connected component of the complement of the union of all walls is called a chamber. Remark that $l$ and $l'$ are in the same chamber if and only if $X^{ss}(l)=X^{s}(l)=X^{s}(l')=X^{ss}(l')$. We can refine this partition of $C^G(X)$ by using the definition of cell of $C^G(X)$ ; a cell of $C^G(X)$ is a connected component of an equivalence class for the relation
\[
l\simeq l' \Leftrightarrow X^{ss}(l)\setminus X^{s}(l)=X^{ss}(l')\setminus X^{s}(l')
\]
Then one can notice that the (semi-)stable locus of $l\in C^G(X)$ is the same inside the cell $F$ containing $l$, and we will denote them by $X^{ss}(F)$ and $X^{s}(F)$. Before focusing on the case of quotients by tori, it is worth mentioning the following property, which is proved in \cite{DH}(4.1.5) for cells which are not chambers, but whose proof is exactly the same for general cells.
\begin{proposition}\label{inverselimit}
Let $F$ be a cell with a nonempty intersection with the closure of another cell $F'$. Then
\item[(1)] $X^{ss}(F')\subsetneq X^{ss}(F)$ and $X^{s}(F)\subset X^{s}(F')$ ;
\item[(2)] The inclusion $X^{ss}(F')\subsetneq X^{ss}(F)$ induces a morphism $f_{F',F}:Y(F')\to Y(F)$, which is surjective and birationnal when $X^{s}(F)$ is nonempty.
\end{proposition}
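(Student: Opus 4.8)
The plan is to prove Proposition \ref{inverselimit} by analyzing the variation of (semi-)stability across the closure relation between cells, relying entirely on the properties of the continuous function $M^{\bullet}(x)$ recalled in the preceding Remark. The key observation is that membership in $X^{ss}$ or $X^s$ is controlled by the sign of $M^l(x)$, and that $l\mapsto M^l(x)$ is continuous on $C^G(X)$; this continuity is precisely what will let us pass to the closure of a cell.

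\medskip

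First, I would fix a point $l\in F$ and a point $l'\in F'$ with $l'\in F\cap\overline{F'}$, or more precisely exploit that $F$ meets $\overline{F'}$. For part (1), suppose $x\in X^{ss}(F')$, so $M^{l'}(x)\leq 0$ for every $l'\in F'$. Taking a sequence $l'_n\in F'$ converging to a point of $F$ and using continuity of $M^{\bullet}(x)$, I would deduce $M^l(x)\leq 0$, so $x\in X^{ss}(F)$; this gives $X^{ss}(F')\subseteq X^{ss}(F)$. For the strictness of the inclusion and for the inclusion $X^s(F)\subseteq X^s(F')$, I would argue symmetrically: if $x\in X^s(F)$ then $M^l(x)<0$ on $F$, and I would use that $F$ lies in the closure of $F'$ together with continuity to propagate the strict inequality, being careful that strict inequalities are not automatically preserved under limits. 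The point is that $x\in X^s(F)$ forces $M^{l'}(x)<0$ for nearby $l'\in F'$ because the locus where $M^{\bullet}(x)=0$ is a union of walls, and a point strictly stable for $F$ cannot lie on a wall that the cell $F'$ would cross; this is where the definitions of cell and wall must be invoked carefully rather than pure continuity.

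\medskip

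For part (2), the inclusion of open subsets $X^{ss}(F')\subseteq X^{ss}(F)$ of the projective variety $X$ is $G$-equivariant, so by the universal property of the good categorical quotient $\pi_F:X^{ss}(F)\to Y(F)$, the composite $X^{ss}(F')\hookrightarrow X^{ss}(F)\to Y(F)$ is $G$-invariant and therefore factors through $\pi_{F'}:X^{ss}(F')\to Y(F')$, yielding the canonical morphism $f_{F',F}:Y(F')\to Y(F)$. Surjectivity follows because $Y(F')$ and $Y(F)$ are both projective (being Proj of finitely generated invariant algebras) and the image of $f_{F',F}$ is closed and contains the dense image of the semistable points; alternatively one checks that every semistable point for $F$ is $G$-equivalent, in the good-quotient sense, to a point of $X^{ss}(F')$. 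For birationality when $X^s(F)\neq\emptyset$, I would use that on the stable locus the quotient is geometric with orbits as fibres, and that by part (1) we have $X^s(F)\subseteq X^s(F')$ with the two stable loci agreeing up to a subset of strictly smaller dimension inside the cells; the map $f_{F',F}$ then restricts to an isomorphism over the common geometric-quotient image of $X^s(F)$, which is a dense open subset of $Y(F)$.

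\medskip

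The main obstacle I anticipate is the careful treatment of \emph{strict} inequalities in part (1): continuity of $M^{\bullet}(x)$ immediately transports $M^{l'}(x)\leq 0$ to the closure, giving the semistable inclusion for free, but the stable inclusion $X^s(F)\subseteq X^s(F')$ and the strictness of $X^{ss}(F')\subsetneq X^{ss}(F)$ genuinely require the combinatorial structure of walls and cells, since $\leq$ is closed under limits while $<$ is not. The whole point of the cell decomposition is that within the closure relation the set $\{M^{\bullet}(x)=0\}$ behaves like a wall complex, so a point strictly stable for the larger cell stays strictly stable after the perturbation into $F'$, and the semistable locus genuinely grows because new points $x$ with $M^{l'}(x)\leq 0$ but $M^{l}(x)>0$ appear. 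As noted in the statement, the argument is identical to that of \cite{DH}(4.1.5) for chambers, so once the wall-crossing bookkeeping is set up the remaining verifications are routine.
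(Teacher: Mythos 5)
The paper does not actually prove this proposition: it invokes \cite{DH} (4.1.5) and notes that the argument there carries over verbatim from non-chamber cells to arbitrary cells, so your attempt is really a reconstruction of Dolgachev--Hu's proof. The parts of your reconstruction that rest on continuity are sound and are indeed the standard argument: picking $l_0\in F\cap\overline{F'}$ and $l'_n\in F'$ with $l'_n\to l_0$, constancy of the (semi)stable loci on cells plus continuity of $l\mapsto M^{l}(x)$ gives $M^{l_0}(x)\le 0$ for $x\in X^{ss}(F')$, hence $X^{ss}(F')\subseteq X^{ss}(F)$. For $X^{s}(F)\subseteq X^{s}(F')$ you do not need any wall combinatorics, contrary to what you suggest: $M^{l_0}(x)<0$ is an \emph{open} condition in $l$, so it holds on a neighbourhood of $l_0$, and that neighbourhood meets $F'$ precisely because $l_0\in\overline{F'}$; constancy on $F'$ finishes it. (Note also that the hypothesis is only $F\cap\overline{F'}\neq\emptyset$, not ``$F$ lies in the closure of $F'$'' as you write; a single point $l_0$ suffices for the reason just given.) Part (2) is correct as you present it: factorization through $Y(F')$ by the universal property of the categorical quotient, surjectivity from projectivity of $Y(F')$ together with density of the nonempty open set $X^{ss}(F')$ in the irreducible $X$, and birationality because $X^{s}(F)$ is a $G$-stable saturated open subset of both stable loci on which both $\pi_F$ and $\pi_{F'}$ are geometric quotients, hence induce an isomorphism over a dense open subset of $Y(F)$.

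The genuine gap is the strictness $X^{ss}(F')\subsetneq X^{ss}(F)$. You assert that ``new points $x$ with $M^{l'}(x)\le 0$ but $M^{l}(x)>0$ appear'', but such a point would lie in $X^{ss}(F')\setminus X^{ss}(F)$, contradicting the inclusion you have just established -- the inequalities are reversed -- and even with the correct orientation ($M^{l_0}(x)\le 0$ but $M^{l'}(x)>0$) the \emph{existence} of such a point is exactly what must be proved; nothing in a pure continuity argument produces it. This is the one place where the definitions of walls and cells genuinely enter. In the chamber case treated in \cite{DH}, one uses that $F$ lies on a wall $\{l:M^{l}(x)=0\}$ attached to a point $x$ with $\dim G_x>0$: such an $x$ is semistable but not stable for $F$, and since for a chamber $F'$ one has $X^{ss}(F')=X^{s}(F')$ and stable points have finite stabilizers, $x\notin X^{ss}(F')$, which gives strictness. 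For a general cell $F'$ one must instead exploit that $F$ and $F'$, being distinct cells whose union is connected (via $l_0$), cannot define the same locus $X^{ss}\setminus X^{s}$, and then combine this with the two inclusions of part (1) to extract a point of $X^{ss}(F)\setminus X^{ss}(F')$. As written, your proof establishes only the non-strict inclusion, and this step needs to be supplied.
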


From now on, we will assume that $G$ is a torus $T\simeq (\mathbb{C}^*)^n$. Let $X$ be an irreducible projective $T$-variety, and let ${\cal L}\in\textrm{Pic}(X)$ be very ample. Then we have an embedding $i : X\hookrightarrow \mathbb{P}(V)$, where $V$ is a finite-dimensional $T$-module, such that ${\cal L}=i^*{\cal O}(1)$. Thanks to the $T$-action on $V$, we can decompose
\[
V=\bigoplus\limits_{\chi\in{\cal X}(T)}V_{\chi}
\]
as a sum of characteristic subspaces. If $x=[v]\in X$, we have $v=\sum v_{\chi}$, and we denote by $\Pi(x)$ the convex hull in ${\cal X}(T)_{\mathbb{R}}:={\cal X}(T)\otimes_{\mathbb{Z}}\mathbb{R}$ of $\{\chi\in{\cal X}(T)|v_{\chi}\neq 0\}$, and by $\mathring{\Pi}(x)$ its interior.\\
\indent
Let $L({\cal L})$ denote the semigroup of ${\cal L}'\in\textrm{NS}^T(X)$ whose image in $\textrm{NS}(X)$ by forgetting the $T$-linearization is the homological equivalence class of some positive power of ${\cal L}$, and let $L({\cal L})_{\mathbb{K}}$ be the convex cone in $\textrm{NS}^T(X)_{\mathbb{K}}$ spanned by $L({\cal L})$, where $\mathbb{K}$ is $\mathbb{Q}$ or $\mathbb{R}$ (which will be a ray in $\textrm{NS}^T(X)_{\mathbb{K}}$). Let $l\in L({\cal L})_{\mathbb{Q}}$. Then there are positive $q,m$ such that 
\[
l=\frac{1}{q}({\cal L}^{\otimes m},\sigma)
\]
where $\sigma$ is a $T$-linearization of ${\cal L}^{\otimes m}$. For all $x\in X$, $\sigma$ induces for all $t\in T$ a morphism
\[
\begin{array}{rcl}
\sigma_x(t):({\cal L}^{\otimes m})_{x}&\to &({\cal L}^{\otimes m})_{tx}\\
s&\mapsto &\chi_x(t)s(t.)
\end{array}
\]
for some $\chi_x\in {\cal X}(T)$. Now notice that $\chi:=\chi_x$ does not depend on $x$ (since $X$ is irreducible, it is enough to cover $X$ by $T$-invariants open affine subsets and use the rigidity of tori), and it does not depend on the ${\cal L}'$ in $\textrm{Pic}^T(X)$ representing $ql$ (since $T$-linearization on $\textrm{Pic}^T(X)_0$ are trivial). Hence if we denote by $RL({\cal L})_{\mathbb{K}}=L({\cal L})_{\mathbb{K}}/\mathbb{K}^*_+$ the set of rays of $L({\cal L})_{\mathbb{K}}$, we have an identification
\[
\begin{array}{rcl}
\Phi : RL({\cal L})_{\mathbb{Q}}&\to & {\cal X}(T)_{\mathbb{Q}}\\
\mathbb{Q}^*_+ l&\mapsto &\frac{1}{mq}\chi
\end{array}
\]
which can be extended by continuity to a map $RL({\cal L})_{\mathbb{R}}\to {\cal X}(T)_{\mathbb{R}}$. Remark that if $p=\frac{1}{m}\chi\in{\cal X}(T)_{\mathbb{Q}}$, $p$ corresponds to the action of $T$ through $\chi$ on $X\hookrightarrow \mathbb{P}(S^mV)$. This allows us to write $l\in C^T(X)$ as a couple $({\cal L},p)$ with ${\cal L}$ very ample in $\mathbb{R}^*_+l$ and $p\in {\cal X}(T)_{\mathbb{R}}$. Remark that $p$ does not depend on the choice of an embedding $i:X\hookrightarrow \mathbb{P}(V)$ such that ${\cal L}=i^*{\cal O}(1)$, and that this couple is not unique (since such a ${\cal L}$ is not).\\
\indent
We can now write again Hilbert-Mumford criterion :
\begin{proposition}
Let $l=({\cal L},p)\in C^T(X)$, and $x\in X$. Let $i:X\hookrightarrow \mathbb{P}(V)$ an embedding such that ${\cal L}=i^*{\cal O}(1)$. Then
\item[(1)] $x\in X^{ss}(l)$ iff $p\in \Pi(x)$ ;
\item[(2)] $x\in X^{s}(l)$ iff $p\in \mathring{\Pi}(x)$.
\end{proposition}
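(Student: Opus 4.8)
The plan is to deduce both equivalences from the Hilbert–Mumford criterion recalled above by computing $\mu^{l}(x,\lambda)$ explicitly in terms of the weights occurring at $x$, and then to convert the resulting family of linear inequalities (indexed by one-parameter subgroups $\lambda$) into a membership statement in $\Pi(x)$ via the separating hyperplane theorem. I would first establish everything for a rational class $l=(\mathcal{L},p)$ with $p=\frac{1}{m}\chi$, where $\frac{1}{q}(\mathcal{L}^{\otimes m},\sigma)$ is an honest $T$-linearized very ample sheaf coming from the Veronese re-embedding $X\hookrightarrow\mathbb{P}(S^{m}V)$, so that the criterion applies verbatim, and then extend to arbitrary real $p$ by continuity at the very end.

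Fix $x=[v]$ with $v=\sum_{\chi}v_{\chi}$ and set $S(x)=\{\chi\in{\cal X}(T)\mid v_{\chi}\neq 0\}$, so that $\Pi(x)=\mathrm{conv}(S(x))$. For a one-parameter subgroup $\lambda$, I compute $x_{0}=\lim_{t\to 0}\lambda(t).x$ by factoring out the smallest power of $t$: writing $m_{\lambda}=\min_{\chi\in S(x)}\langle\chi,\lambda\rangle$, the limit is the class of $\sum_{\langle\chi,\lambda\rangle=m_{\lambda}}v_{\chi}$, a $\lambda$-fixed point lying in the weight-$m_{\lambda}$ eigenspace. Reading off the character by which $\lambda$ acts on the fibre of $\mathcal{L}$ at $x_{0}$ (in the normalization compatible with the convention ``$x\in X^{ss}\Leftrightarrow\mu\le 0$'' recalled above, this equals $m_{\lambda}$), and incorporating the twist $p$, which shifts the linearization on every fibre, I obtain the identity
\[
\mu^{l}(x,\lambda)=\min_{\chi\in S(x)}\langle\chi,\lambda\rangle-\langle p,\lambda\rangle.
\]
This is the computational heart of the argument; the factor $m$ from the Veronese embedding cancels because the weight polytope of $x$ in $\mathbb{P}(S^{m}V)$ is $m\,\Pi(x)$, so the condition $\chi\in m\,\Pi(x)$ becomes exactly $p\in\Pi(x)$.

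With this identity, the Hilbert–Mumford criterion gives $x\in X^{ss}(l)$ iff $\min_{\chi\in S(x)}\langle\chi,\lambda\rangle\le\langle p,\lambda\rangle$ for every $\lambda$. Since this inequality is positively homogeneous and continuous in $\lambda$, and the rays spanned by one-parameter subgroups are dense in $\Hom(\mathbb{C}^{*},T)\otimes\mathbb{R}$, the dual of ${\cal X}(T)_{\mathbb{R}}$, it holds for all $\lambda$ iff it holds for all real functionals. By the separating hyperplane theorem, $p\notin\Pi(x)$ iff some functional strictly separates $p$ from $S(x)$, i.e. $\langle p,\lambda\rangle<\min_{\chi\in S(x)}\langle\chi,\lambda\rangle$; hence the system holds for all $\lambda$ precisely when $p\in\Pi(x)$, which proves (1) for rational $p$. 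For (2) I use the strict part of the criterion: $x\in X^{s}(l)$ iff $\langle p,\lambda\rangle>\min_{\chi\in S(x)}\langle\chi,\lambda\rangle$ for every nonzero $\lambda$. Applying this to $\pm\lambda$ shows it is equivalent to $p$ being strictly interior to every supporting halfspace of $\Pi(x)$, that is $p\in\mathring{\Pi}(x)$; note this also correctly yields $\mathring{\Pi}(x)=\emptyset$ when the weights of $x$ span a proper affine subspace, matching the absence of stable points there.

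Finally, to pass from rational to real $p$ I invoke the continuous function $M^{\bullet}(x)$ from the Remark: the sets $\{p\mid p\in\Pi(x)\}$ and $\{p\mid M^{(\mathcal{L},p)}(x)\le 0\}$ are both closed, while $\{p\mid p\in\mathring{\Pi}(x)\}$ and the stable condition are both open; since they agree on the dense set of rational $p$, they agree for all $p\in{\cal X}(T)_{\mathbb{R}}$. The step I expect to be the main obstacle is not the convex-geometry translation, which is standard once the identity is in place, but the careful bookkeeping in the weight computation of $\mu^{l}(x,\lambda)$ — reconciling the sign of the fibre character, the direction of the shift induced by the twist $p$, and the scaling by $m$ from the embedding $\mathbb{P}(S^{m}V)$ — so that the criterion's convention genuinely produces $\min_{\chi\in S(x)}\langle\chi,\lambda\rangle-\langle p,\lambda\rangle$ and hence $p\in\Pi(x)$ rather than its reflection.
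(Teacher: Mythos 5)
Your proof is correct and follows essentially the same route as the paper: reduce to rational $p$, compute the limit/weight $\min_{\chi}\langle\chi,\lambda\rangle-\langle p,\lambda\rangle$ from the weight decomposition $v=\sum v_\chi$, translate the resulting family of inequalities into $p\in\Pi(x)$ (resp. $\mathring{\Pi}(x)$) by convex separation, and extend by continuity. The paper merely phrases the Hilbert--Mumford step as the nonvanishing of $\lim_{t\to 0}\lambda(t).v$ in the affine cone rather than via an explicit formula for $\mu^{l}(x,\lambda)$, which is the same computation.
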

Again, these conditions do not depend on the choice of $i$.
\begin{proof}
Assume first that $l\in C^T(X)\cap \textrm{NS}^T(X)_{\mathbb{Q}}$, and let us write $l=({\cal L},\nu)$ with $\nu\in {\cal X}(T)$. $x=[v]\in X$ is semi-stable for $l$ if and only if for all one-parameter subgroup $\lambda : \mathbb{C}^*\to T$, $\lim\limits_{t\to 0}\lambda(t).x\neq 0$. By writing $v=\sum v_{\chi}$, one gets
\[
\lambda(t).x=\sum (\chi-\nu)(\lambda(t))v_{\chi}
\]
hence for all $\lambda\in {\cal X}_*(T)$ there exists $\chi\in {\cal X}(T)$ with $v_{\chi}\neq 0$ such that $\langle \chi,\lambda\rangle\leq\langle \nu,\lambda \rangle$, which exactly means for $\nu$ to be in the convex hull of $\Pi(x)$. This characterization extends by continuity to $C^T(X)$, and the same holds for stable points by replacing $\leq$ by $<$.
\end{proof}

\section{An expression of the set of unstable points}\label{sec3}

\subsection{Unstable points for full torus actions on full flag varieties}\label{sec3-1}
Let $G$ be a complex semi-simple connected algebraic group, let $T\subset B\subset G$ be respectively a maximal torus and a Borel subgroup of $G$. Let $\Phi=\Phi(G,T)$ be the root system associated to $T$, and $\Delta$ the set of simple roots relatively to $B$. Let ${\cal L}\in C^T(X)\cap\textrm{NS}^T(X)_{\mathbb{Q}}$ be a rational $T$-ample invertible sheaf on $X=G/B$, and let $l$ be a very ample representative of ${\cal L}^{\otimes n}$ for some $n>0$, which can be written as $l=({\cal L}_{\lambda},q)$ for some $\lambda$ and $q\in {\cal X}$ (where ${\cal X}$ denote the character lattice for $T$). Remark we can take $\lambda$ in ${\cal X}$ instead that in the weight lattice $\Lambda$ up to taking some positive power of $l$. Then ${\cal L}_{\lambda}=i^*{\cal O}(1)$, with $i:X\hookrightarrow \mathbb{P}(V_{\lambda})$. Let $W$ be the Weyl group of $T$.\\
\indent
We will call a one parameter subgroup $\xi\in{\cal X}^*$ regular if $X^T=X^{\xi}$. Since $X$ is a flag variety, the $T$-fixed points are exactly the $x_w=wB/B$ with $w\in W$, and for $\xi\in{\cal X}^*$, being regular is equivalent to say that $\langle\alpha,\xi\rangle\neq 0$ for all $\alpha\in\Delta$. Moreover, we will say that $x\in X$ is unstable relatively to $\xi\in{\cal X}^*$ if $\mu^{\cal L}(x,\xi)>0$. Then $x\in X$ is unstable iff $\exists \xi\in{\cal X}^*$ such that $x$ is unstable relatively to $\xi$. We define for $\hat{w}\in W$
\[
W(\hat{w},\lambda,q):=\{w\in W,\exists\xi\in {\cal X}^*\textrm{ with } \xi\textrm{ dominant regular },\langle w\lambda-\hat{w}q,\xi\rangle >0\}
\]
\indent Then we have the following description of unstable points for ${\cal L}$ :
\begin{proposition}\label{unstablefullflags}
With ${\cal L}$, $\lambda$ and $q$ as previously, and if we denote by $X_w$ the Bruhat cell $BwB/B$, then
\[
X^{us}({\cal L})=\bigcup\limits_{\hat{w}\in W}\bigcup\limits_{w\in W(\hat{w},\lambda,q)}\hat{w}^{-1}X_w
\]
\end{proposition}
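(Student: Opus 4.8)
The plan is to detect instability through the Hilbert--Mumford criterion in the form recalled above: $x\in X^{us}({\cal L})$ if and only if there is some $\xi\in{\cal X}^*$ with $\mu^{\cal L}(x,\xi)>0$. First I would reduce to \emph{regular} one-parameter subgroups. The separation condition ``$q\notin\Pi(x)$'' underlying $\mu^{\cal L}(x,\xi)>0$ is governed by a continuous, positively homogeneous function of $\xi$, so positivity at one $\xi_0$ persists on an open cone of cocharacters; since the non-regular $\xi$ lie on the finitely many hyperplanes $\langle\alpha,\cdot\rangle=0$ with $\alpha\in\Delta$, this open cone always contains a regular integral cocharacter. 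Hence $x$ is unstable iff it is unstable relatively to some regular $\xi$. Finally, every regular $\xi$ can be written uniquely as $\xi=\hat w^{-1}\xi'$ with $\xi'$ dominant regular and $\hat w\in W$, using that the dominant chamber is a strict fundamental domain for the $W$-action on cocharacters.

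Next I would identify the attracting cells. Let $U$ be the unipotent radical of $B$, so that $X_w=BwB/B=U\cdot x_w$. For $x=u\cdot x_w$ with $u\in U$, lifting to $V_\lambda$ gives a vector $u\cdot v_{w\lambda}$ whose support consists of $w\lambda$ together with weights of the form $w\lambda+\sum_{\alpha>0}c_\alpha\alpha$ with $c_\alpha\geq 0$, the extremal weight $w\lambda$ always occurring. For $\xi'$ dominant regular one has $\langle\alpha,\xi'\rangle>0$ for every positive root, so $w\lambda$ is the \emph{unique} minimiser of $\langle\,\cdot\,,\xi'\rangle$ on this support; therefore $\lim_{t\to 0}\xi'(t).x=[v_{w\lambda}]=x_w$, and conversely this limit equals $x_w$ exactly when $x\in X_w$ (the cells $X_v$ partitioning $G/B$). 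Conjugating by a lift $n$ of $\hat w$ and using $\hat w^{-1}\xi'=n^{-1}\xi'(\,\cdot\,)n$, this shows that for $x\in\hat w^{-1}X_w$, i.e.\ $nx\in X_w$, one has $\lim_{t\to 0}(\hat w^{-1}\xi')(t).x=x_{\hat w^{-1}w}$.

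It then remains to compute $\mu^{\cal L}$. Since $\mu^{\cal L}(x,\xi)$ is, by definition, the $\xi$-weight on the fibre over the limit point, it depends only on $\lim_{t\to 0}\xi(t).x$; and at a $T$-fixed point $x_u$ the support in $V_\lambda$ is the single extremal weight $u\lambda$, whence $\mu^{\cal L}(x_u,\xi)=\langle u\lambda-q,\xi\rangle$ in the conventions fixed by the polytope criterion. Combining this with the previous step, for $x\in\hat w^{-1}X_w$ I obtain
\[
\mu^{\cal L}(x,\hat w^{-1}\xi')=\langle \hat w^{-1}w\lambda-q,\hat w^{-1}\xi'\rangle=\langle w\lambda-\hat w q,\xi'\rangle .
\]
Thus a point $x\in\hat w^{-1}X_w$ is unstable relatively to some $\hat w^{-1}\xi'$ (with $\xi'$ dominant regular) precisely when $\langle w\lambda-\hat w q,\xi'\rangle>0$ for some such $\xi'$, which is exactly the condition $w\in W(\hat w,\lambda,q)$. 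Feeding this back through the reduction of the first paragraph---each unstable $x$ comes from a unique regular $\xi=\hat w^{-1}\xi'$ and lies in the cell $\hat w^{-1}X_w$ determined by $nx\in X_w$---yields both inclusions and hence the claimed equality $X^{us}({\cal L})=\bigcup_{\hat w\in W}\bigcup_{w\in W(\hat w,\lambda,q)}\hat w^{-1}X_w$.

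I expect the main obstacle to be the Bialynicki--Birula type identification of the attracting cells together with the careful bookkeeping of the Weyl translations: one must track precisely how the cocharacter $\xi$, its dominant representative $\xi'$, the cell $\hat w^{-1}X_w$ and the resulting fixed point $x_{\hat w^{-1}w}$ correspond, so that the pairing lands on $\langle w\lambda-\hat w q,\xi'\rangle$ rather than on one of its sign- or inverse-variants. The reduction to regular cocharacters is comparatively soft, but it is essential: it is exactly the regularity $\langle\alpha,\xi'\rangle>0$ that makes $w\lambda$ the \emph{unique} minimiser and hence pins down the attracting cell.
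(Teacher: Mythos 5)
Your proof is correct and follows essentially the same route as the paper: reduce to regular one-parameter subgroups, use the unique Weyl translate bringing a regular cocharacter into the dominant chamber, identify the limit of a point of the translated Bruhat cell $\hat w^{-1}X_w$ as the fixed point $x_{\hat w^{-1}w}$, and read off instability from the pairing $\langle w\lambda-\hat w q,\xi'\rangle$. The only (harmless) difference is that you dispose of non-regular cocharacters up front via the continuity and positive homogeneity of $\xi\mapsto\min_{\chi\in\Pi(x)}\langle\chi-q,\xi\rangle$, whereas the paper first analyses the limit point for non-regular $\xi$ explicitly and only then invokes the same continuity argument.
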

\begin{remark}
Since $X^{us}$ is closed, this result is still true by replacing $X_w$ by their closures. However, writing the set of unstable points as a union of affine spaces could be helpful to compute the associated local cohomology groups.
\end{remark}
\begin{proof}
Roughly speaking, the idea of the proof is to use the Hilbert-Mumford criterion when the one-parameter subgroup first runs in a dominant chamber of ${\cal X}^*$, and then to translate by elements of $W$.\\
\indent Let $\xi\in{\cal X}^*$ be a dominant regular one parameter subgroup, and let $x=bwB/B\in X$ for some $b\in B$ and $w\in W$. Define
\[
x_0=\lim\limits_{t\to 0}\xi(t).x
\]
Since $\xi$ is regular, $x_0$ lies in $X^{T}$, and since it is dominant, seeing $X\subset \mathbb{P}(V_{\lambda})$, we get that $x_0=[v_{w\lambda}]$. Then we have
\[
\xi(t).v_{w\lambda}=(w\lambda-q)(\xi(t)).v_{w\lambda}
\]
and $x$ is unstable relatively to $\xi$ iff
\[
\langle w\lambda,\xi\rangle > \langle q,\xi\rangle
\]
\indent
Now let $\xi\in {\cal X}^*$ be regular, but not necessarily dominant. In that case, there exists a unique $\hat{w}\in W$ such that $\hat{w}\xi$ is dominant. Let $x\in X$, that will be written as $x=\hat{w}^{-1}bwB/B$ for some $b\in B$ and $w\in W$. Then $x_0=\hat{w}^{-1}wB/B$, and $x$ is unstable relatively to $\xi$ iff $\hat{w}x$ is unstable relatively to $\hat{w}\xi$, ie.
\[
\langle w\lambda, \hat{w}\xi\rangle > \langle \hat{w}q,\hat{w}\xi\rangle
\]
\indent
The last case if when $\xi\in\hat{\cal X}^*$ is not regular. There exists a (non-unique!) $\hat{w}\in W$ such that $\hat{w}\xi$ is dominant. Let $x\in X$. Then it lies in some translated Bruhat cell $\hat{w}^{-1}BwB/B$, for some $w\in W$, and $x_0$ lies in the same cell. Indeed, by writing $b=b_0b_+$ with 
\[
b_0\in\Im(\prod\limits_{\alpha >0, \langle\alpha,\hat{w}\xi\rangle=0}U_{\alpha}\to B) \textrm{ and } b_+\in\Im(\prod\limits_{\alpha >0, \langle\alpha,\hat{w}\xi\rangle>0}U_{\alpha}\times T\to B)
\]
we get
\[
(\hat{w}\xi)(t)bwB/B=b_0(\hat{w}\xi)(t)b_+(\hat{w}\xi)(t)^{-1}wP/P\underset{t\to 0}{\longrightarrow}b_0wB/B
\]
Then in $\mathbb{P}(V_{\lambda})$, we have
\[
\hat{w}x_0=[\sum\limits_{\chi\geq 0}\alpha_{\chi}v_{w\lambda+\chi}]
\]
for some constants $\alpha_{\chi}$ (with $\alpha_0\neq 0$). Then $x$ is unstable relatively to $\xi$ iff
\[
\forall \chi\geq 0, \langle w\lambda+\chi,\hat{w}\xi\rangle > \langle \hat{w}q,\hat{w}\xi\rangle
\]
which is, since $\chi$ has to be dominant, equivalent to the same condition for $\chi=0$. But this precisely means that the $T$-fixed point $\hat{w}^{-1}wB/B$ is also unstable relatively to $\xi$, hence $x$ is unstable relatively to $\xi$ iff the whole translated Bruhat cell is. It remains now to show that non-regular one-parameter subgroups do not involve extra unstable points.\\
\indent
Let $x=bwB/B\in X$ be unstable relatively to a non-regular dominant one-parameter subgroup $\xi\in {\cal X}^*$. Then by continuity, there exists a regular dominant $\xi'\in {\cal X}^*_{\mathbb{Q}}$ such that $\langle w\lambda-q,\xi'\rangle >0$, and $\xi'$ can be chosen in ${\cal X}^*$ (since $x$ will also be unstable relatively to $m\xi'$ for positive $m$), and the same can be done when translating by $\hat{w}\in W$. This completes the proof.
\end{proof}

\subsection{More general case}\label{sec3-2}

Let $G, B, T$ be as previously, let $P$ be a parabolic subgroup containing $B$. Set $X=G/P$. Define $\Delta_P\subset \Delta$ as the set of simple roots $\alpha$ such that the one-parameter subgroup $U_{\alpha}$ lies in the unipotent radical $U$ of $P$. To $P$ we will associate the subtorus
\[
\hat{T}:=\bigcap\limits_{\alpha\in\Delta\setminus\Delta_P}\ker(\alpha)
\]
of $T$, which can be seen as a "supplement" in $T$ of the maximal torus $T_P\subset T$ of the Levi subgroup $L_P$. Let $\hat{\cal X}:={\cal X}(\hat{T})$ and $\hat{\cal X}^*:={\cal X}^*(\hat{T})$. The inclusion $\hat{T}\hookrightarrow T$ induces an inclusion $j:\hat{\cal X}^*\to {\cal X}^*$. Let $W_P$ be the Weyl group of $P$ and $W^P:=W/W_P$. A coset $wW_P\in W^P$ will always be represented by $w\in W$ of minimal length. We would like to investigate on the unstable locus $X^{us}(l)$ for $\hat{T}$-very ample invertible sheaf $l=({\cal L}_{\lambda},q)$ on $X=G/P$ (with $\lambda$ in the weight lattice $\Lambda_P$ of $P$, and $q\in \hat{\cal X}$) with the same kind of ideas than before.\\
\indent Now let us look at the main tools of the previous proof to see if it adapts in our new setting. First of all, we still have "enough" regular one-parameter subgroups in $\hat{\cal X}^*$, and we do not have to much $\hat{T}$-fixed points in $X$, which are exactly the $wP/P$ with $w\in W^P$. Let us denote by $N$ the subset of nonregular one-parameter subgroups of $\hat{T}$. We will call a \textit{regular chamber} a connected component of $\hat{\cal X}^*_{\mathbb{R}}\setminus N_{\mathbb{R}}$. Then for a given $x\in X$, we have the same limit $x_0$ for $\xi$ in a fixed regular chamber. Indeed, if we write one-parameter subgroups as a linear combination of fundamental coweights, the previous claim is true since regularity coincides with coefficients being all nonzero.
\\
\indent The second problem is that in this new setting, we do not necessarily have a $\hat{w}\in W^P$ sending a regular one-parameter subgroup in $\hat{\cal X}^*$ to a dominant one (where in this setting, $\xi\in\hat{\cal X}^*$ dominant means $j(\xi)$ is). One simple example that does not satisfies this condition is given by $G=\textrm{SL}_3$ with standard $B$ and $T$, with $P$ the parabolic subgroup associated to $\alpha_1$, and $\hat{T}=\check{\varpi_1}$ : here $W^P=\{1,s_1,s_2s_1\}$, and $s_1\check{\varpi_1}$ and $s_2s_1\check{\varpi_1}$ are both different from $-k\check{\varpi_1}$ for $k>0$.  We will ask for existence of such a $\hat{w}$ for each one-parameter subgroup in $\hat{\cal X}^*$, and let
\[
\hat{W}=\{\hat{w}\in W^P, \exists \xi\in\hat{\cal X}^* \textrm{ dominant regular s.t. }\hat{w}^{-1}\xi\in \hat{\cal X}^*\}
\]
\begin{remark}
Since regular chamber closures are intersection of Weyl chambers closures in ${\cal X}^*_{\mathbb{R}}$, if $\hat{w}\xi$ is dominant for some $\hat{w}\in\hat{W}$ and $\xi\in\hat{\cal X}^*$ regular, then the same holds for each $\xi'$ lying the same regular chamber closure. Moreover, since the stabilizer in $W$ of the regular dominant chamber of $\hat{\cal X}^*$ (seen in ${\cal X}^*$) is exactly $W_P$, such a $\hat{w}\in\hat{W}$ is unique. Remark that for the case $P=B$, then $\hat{T}=T$ and $\hat{W}=W$.
\end{remark}
In this setting, we can extend Proposition \ref{unstablefullflags}. For $\hat{w}\in\hat{W}$, define
\[
W(\hat{w},\lambda,q):=\{w\in W,\exists\xi\in \hat{\cal X}^*\textrm{ with } \xi\textrm{ dominant regular },\langle w\lambda-\hat{w}q,\xi\rangle >0\}
\]
We get
\begin{theoreme}\label{unstableflags}
The subset of unstable points for $l=({\cal L}_{\lambda},q)$ is
\[
X^{us}({\cal L})=\bigcup\limits_{\hat{w}\in \hat{W}}\bigcup\limits_{w\in W(\hat{w},\lambda,q)}\hat{w}^{-1}X_w
\]
\end{theoreme}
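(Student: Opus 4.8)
The plan is to follow the scheme of the proof of Proposition~\ref{unstablefullflags}, replacing Weyl chambers by the regular chambers of $\hat{\cal X}^*_{\mathbb R}$ and the group $W$ by $\hat W$. The starting point is the torus Hilbert--Mumford criterion: a point $x=[v]\in X\hookrightarrow\mathbb{P}(V_\lambda)$ is unstable for $l=({\cal L}_\lambda,q)$ if and only if $q\notin\Pi(x)$, equivalently if and only if there is a \emph{regular} $\xi\in\hat{\cal X}^*$ with $x$ unstable relatively to $\xi$ (a separating one-parameter subgroup can always be perturbed into a rational regular one and rescaled to lie in $\hat{\cal X}^*$). It therefore suffices to understand, regular chamber by regular chamber, which points become unstable, and to check that only the chambers indexed by $\hat W$ are needed.

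For the inclusion $\supseteq$, I would fix $\hat w\in\hat W$ and, by definition of $\hat W$, a dominant regular $\eta\in\hat{\cal X}^*$ with $\xi:=\hat w^{-1}\eta\in\hat{\cal X}^*$. For $x=\hat w^{-1}bwP/P$ with $b\in B$ and $w\in W^P$, I would compute $x_0=\lim_{t\to0}\xi(t).x$ by transporting the computation of Proposition~\ref{unstablefullflags} through a representative of $\hat w$ in $N(T)$, using that the embedding $X\hookrightarrow\mathbb{P}(V_\lambda)$ is $G$-equivariant: since $\eta$ is dominant regular it contracts the positive part of $B$, so $\hat w.x$ tends to $wP/P=[v_{w\lambda}]$ and hence $x_0=\hat w^{-1}wP/P$. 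Reading off the weight of $\xi$ on the fibre over $x_0$ then gives that $x$ is unstable relatively to $\xi$ exactly when $\langle w\lambda-\hat wq,\eta\rangle>0$, i.e. when $w\in W(\hat w,\lambda,q)$. This shows $\hat w^{-1}X_w\subseteq X^{us}({\cal L})$ for every $\hat w\in\hat W$ and every $w\in W(\hat w,\lambda,q)$, and incidentally identifies $\hat w^{-1}X_w$ with the attracting cell of the fixed point $\hat w^{-1}wP/P$ for a generic $\xi$ in the chamber $\hat w^{-1}(\textrm{dominant})$.

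For the reverse inclusion, let $x\in X^{us}({\cal L})$ and pick a regular $\xi\in\hat{\cal X}^*$ detecting its instability. If the chamber of $\xi$ is one of those coming from $\hat W$, the computation above places $x$ in one of the listed cells $\hat w^{-1}X_w$ with $w\in W(\hat w,\lambda,q)$. The case where $\xi$ is only \emph{non-regular} is handled as in the last two paragraphs of the proof of Proposition~\ref{unstablefullflags}: choosing $\hat w\in W^P$ with $\hat w\xi$ dominant, the limit $x_0$ stays in the translated cell $\hat w^{-1}BwP/P$, the instability inequality $\langle w\lambda+\chi-\hat wq,\hat w\xi\rangle>0$ required for all dominant $\chi\geq0$ reduces to its value at $\chi=0$, and a small rational perturbation produces a regular $\xi'\in\hat{\cal X}^*$ detecting the same instability; so non-regular directions add nothing new.

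The delicate point, which does not arise for full flags, is that $\hat{\cal X}^*_{\mathbb R}$ may contain regular chambers that are \emph{not} reachable from the dominant one by any $\hat w\in\hat W$ (as in the $\mathrm{SL}_3$ example discussed above). The heart of the argument is thus to show that such chambers detect no unstable point beyond those already listed: if $x$ is unstable relatively to a $\xi$ lying in a non-reachable chamber, then $x$ is already unstable relatively to some $\xi'$ sitting in a chamber indexed by $\hat W$. I would deduce this from the fact that the one-parameter subgroups separating $q$ from $\Pi(x)$ form an open convex cone, together with the finiteness of $X^{\hat T}=\{wP/P : w\in W^P\}$ and the description of regular chambers as intersections of Weyl chamber closures in ${\cal X}^*_{\mathbb R}$; this is where the standing assumptions on $(P,\hat T)$ enter, and I expect it to be the main obstacle of the proof. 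Once it is established, combining it with the two inclusions above yields the stated equality.
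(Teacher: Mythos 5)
Your proposal follows exactly the paper's route: the paper's own proof of Theorem \ref{unstableflags} is literally the one-line remark that the argument of Proposition \ref{unstablefullflags} goes through verbatim after replacing ${\cal X}^*$ by $\hat{\cal X}^*$ and $W$ by $\hat{W}$, which is what you have spelled out in your first three paragraphs. The only point to adjust is your last paragraph: the ``non-reachable regular chambers'' you single out as the main obstacle cannot occur, because the theorem is stated under the standing assumption (introduced just before the definition of $\hat{W}$) that \emph{every} one-parameter subgroup in $\hat{\cal X}^*$ admits a $\hat{w}\in W^P$ making it dominant; together with the remark that regular chamber closures are intersections of Weyl chamber closures and that the corresponding $\hat{w}$ is unique, this says that every regular chamber of $\hat{\cal X}^*_{\mathbb{R}}$ is of the form $\hat{w}^{-1}(\textrm{dominant regular chamber})$ for a unique $\hat{w}\in\hat{W}$, so there is nothing left to establish there and your two inclusions already close the argument.
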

\begin{proof}
The proof is the same as for Proposition \ref{unstablefullflags} after replacing ${\cal X}^*$ by $\hat{\cal X}^*$ and taking $\hat{w}$ in $\hat{W}$.
\end{proof}
This description makes a few questions naturally arise :\\
\textbullet \indent Let us denote by $S_d$ the set of $w\in W^P$ such that $wP/P$ is unstable relatively to some dominant regular one-parameter subgroup. Since $X^{us}$ is closed, if $w\in S_d$ and $w\geq v$, then $v\in S_d$. An interesting question would be to know what are the maximal (for the Bruhat order) elements of $S_d$, in order to describe the set of unstable points relatively to some dominant regular one-parameter subgroup (which will be denoted by $X^{us}_d$) as a minimal union of Schubert varieties. In particular, it would be interesting to know when it is a single Schubert variety.\\
\textbullet \indent An other interesting question is to know how the Schubert cells $\hat{w}X_{w_1}$ and $\hat{w}'X_{w_2}$ intersect for some $\hat{w},\hat{w}'\in \hat{W}$ and $w_1,w_2\in W^P$. One application would be to have a nice stratification of $X^{us}$ that would allow us to compute local cohomology groups with support in $X^{us}$ of invertible sheaves on $X$, and this would be helpful to compute cohomology groups of invertible sheaves on the quotient $Y(l)$.\\

\section{Case of a maximal parabolic subgroup}\label{sec4}

\subsection{Description of $S_d$}\label{sec4-1}

Let $G,P,B,T$ be as in the previous section, such that $P$ is a maximal parabolic subgroup of $G$, associated to the simple root $\alpha_i$, and let $X=G/P$. To be in the previous setting, we have to take $\hat{T}=\check{\varpi_i}(\mathbb{C}^*)$. We have $\textrm{Pic}(X)=\mathbb{Z}_{\varpi_i}$, and let $l=({\cal L}_{\lambda},q)$ be a $\hat{T}$-very ample invertible sheaf where $\lambda=n\varpi_i$ with $n>0$, and $q=m\varpi_i$. We also need the existence of a $w\in W^P$ sending $-\check{\varpi_i}$ to $\check{\varpi_i}$, which has to be the longest word $w_{0,P}$ (ie. $\hat{W}=\{1,w_{0,P}\}$) since $-\check{\varpi_i}$ lies in the closure of the antidominant Weyl chamber of ${\cal X}^*$, and since the stabilizer in $W$ of $\mathbb{Z}_{>0}\check{\varpi_i}$ is exactly $W_P$. Following the classification and notations in \cite{Bo}, this is the case only in the following types :
\begin{itemize}
\item Type $B_n$, $C_n$, $E_7$, $E_8$, $F_4$, $G_2$ ;
\item Type $A_n$ with odd $n$, and $i=\frac{n+1}{2}$ ;
\item Type $D_n$ with even $n$ ;
\item Type $D_n$ with odd $n$, and $i<n-1$ ;
\item Type $E_6$ with $i=2$ or 4
\end{itemize}
We will first give an answer to the first question with the following proposition :
\begin{theoreme}\label{instablemax}
When nonempty, $S_d$ has a unique maximal element $w$, and $X^{us}_d$ is the Schubert variety $\overline{X_w}$.
\end{theoreme}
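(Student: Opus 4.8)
The plan is to reduce the statement to a combinatorial claim about $W^P$ and then isolate the one genuinely nontrivial point. Since $P$ is maximal, $\hat T=\check\varpi_i(\mathbb C^*)$ has rank one, so $\hat{\cal X}^*_{\mathbb R}$ is a line and the dominant regular one-parameter subgroups are exactly the positive multiples of $\xi_0=\check\varpi_i$. Hence $X^{us}_d$ is the unstable locus for this single direction, and running the Hilbert–Mumford computation from the proof of Proposition \ref{unstablefullflags} at the fixed point $wP/P$ shows, with $\lambda=n\varpi_i$ and $q=m\varpi_i$, that $wP/P$ is unstable relatively to $\xi_0$ iff $\langle w\lambda-q,\check\varpi_i\rangle>0$, i.e. iff $f(w):=\langle w\varpi_i,\check\varpi_i\rangle>c$ with $c:=\tfrac{m}{n}\langle\varpi_i,\check\varpi_i\rangle$. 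Thus $S_d=\{w\in W^P\mid f(w)>c\}$, and by Theorem \ref{unstableflags} we have $X^{us}_d=\bigcup_{w\in S_d}X_w$.

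First I would record the monotonicity that makes $S_d$ a lower order ideal. Using the standard fact that $v\le w$ in $W^P$ implies $w\varpi_i\le v\varpi_i$ in the dominance order, the difference $v\varpi_i-w\varpi_i$ is a nonnegative sum of positive roots; pairing with the dominant coweight $\check\varpi_i$ — which on a positive root equals its $\alpha_i$-coefficient, hence is $\ge 0$ — gives $f(v)\ge f(w)$. So $f$ is weakly order-reversing and $S_d$ is downward closed, as it must be since $X^{us}$ is closed. In particular $X^{us}_d$ is a closed $B$-stable subvariety of $G/P$, hence a union of Schubert varieties whose irreducible components are the $\overline{X_w}$ indexed by the Bruhat-maximal elements of $S_d$. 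The Theorem is therefore equivalent to the assertion that $S_d$ has a single maximal element, i.e. that $X^{us}_d$ is irreducible.

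The heart of the matter is to prove that every super-level set $A_k:=\{w\in W^P\mid f(w)\ge k\}$ has a unique Bruhat-maximal element; since, by varying $m/n$, each such $A_k$ (for $k$ an attained value of $f$) is realized as some $S_d$, this is exactly what is needed. I would prove that $A_k$ is upward directed: given $u,u'\in A_k$ I must produce $z\in W^P$ with $z\ge u$, $z\ge u'$ and $f(z)\ge k$; as $f$ is order-reversing any such $z$ automatically satisfies $f(z)\le\min(f(u),f(u'))$, so the task is to build a common upper bound realizing the value $\min(f(u),f(u'))$. A finite upward-directed poset has a unique maximal element, which then gives the $w$ of the statement. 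To construct $z$ I would work fibre by fibre: the level set $f^{-1}(k)$ consists of the $w\varpi_i$ with a fixed $\alpha_i$-coefficient, and two weights in the same fibre differ only by roots of the Levi $L_P$ (those with vanishing $\alpha_i$-coefficient), so each fibre is a union of $W_P$-orbits on which the dominance order is governed by the Levi root system. The $W_P$-antidominant representative of each such orbit is its unique Bruhat-maximal element, and the claim reduces to showing that these per-orbit maxima are comparable across the orbits making up one fibre, and that the fibre maxima increase as $k$ decreases.

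The main obstacle is precisely this last point. For minuscule $\varpi_i$ each fibre is a single $W_P$-orbit and the comparison is immediate (one checks, as for subsets in type $A$, that the successive antidominant representatives form a chain), but for the non-minuscule weights in the list — where fibres carry multiplicities and split into several $W_P$-orbits — one genuinely has to use the geometry of the weight $\varpi_i$ attached to the maximal parabolic, together with the defining condition $w_{0,P}\check\varpi_i=-\check\varpi_i$ that singles out the admissible types, to rule out two incomparable maxima. Here a uniform argument through the lifting property of the Bruhat order, or, failing that, a verification along the explicit models of $W^P$ for the finitely many families in the list, closes the gap. Once uniqueness of the maximal element $w$ of $S_d$ is secured, the identity $\overline{X_w}=\bigsqcup_{v\le w}X_v$ together with $S_d=\{v\mid v\le w\}$ yields $X^{us}_d=\overline{X_w}$; the same argument applied after translation by $w_{0,P}$ treats the companion $\hat w=w_{0,P}$ stratum and produces the opposite Schubert variety of the abstract.
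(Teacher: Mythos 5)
Your setup is correct and is essentially equivalent to the first half of the paper's argument: writing $f(w)=\langle w\varpi_i,\check{\varpi_i}\rangle$, your observation that $S_d$ is the superlevel set $\{w\in W^P \mid f(w)>c\}$ and that $f$ is weakly order-reversing (so that $S_d$ is a lower order ideal and $X^{us}_d$ is $B$-stable and closed) carries the same content as the paper's preparatory lemma, which shows that $f$ changes exactly when a reduced word gains an occurrence of $s_i$, and hence that $S_d=W^P(k)$ for some $k$, where $W^P(k)$ is the set of elements of $W^P$ admitting a reduced expression with at most $k$ occurrences of $s_i$. Up to that point the two arguments are interchangeable.

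The gap is exactly where you flag it yourself, and it is not closed. Uniqueness of the Bruhat-maximal element of each superlevel set is the entire content of the theorem, and your proposal replaces it by two unproved assertions: that the $W_P$-orbit maxima inside one fibre of $f$ are pairwise comparable, and that the fibre maxima form a chain as $k$ varies. The escape routes you offer do not suffice as stated: ``a uniform argument through the lifting property of the Bruhat order'' is not given, and ``verification along the explicit models for the finitely many families in the list'' is not a finite check, since the list contains the infinite families $A_n$, $B_n$, $C_n$, $D_n$ with all admissible $i$ (in types $B_n$ and $C_n$ every maximal parabolic occurs). The ingredient the paper uses at precisely this point is Deodhar's lemma: for a parabolic $Q$ and $v\le w$ with $v\in W^Q$ a minimal representative, the set $C(w,v)=\{z \mid v\le z\le w,\ zv^{-1}\in W_Q\}$ has a unique maximal element. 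Combined with the (also nontrivial, separately argued) claim that any maximal element of $W^P(k)$ can be written as $us_iv$ with $u\in W_P$ and $v$ maximal in $W^P(k-1)$, this gives by induction that $W^P(k)$ has a unique maximum, namely the maximum of $C(w_0,s_iv)$ for $v$ the maximum of $W^P(k-1)$. Your directedness strategy could likely be completed, but only by importing this lemma or an equivalent statement about maxima of double cosets $W_PwW_P$ in $W/W_P$; as written, the proposal reduces the theorem to an equivalent combinatorial statement without proving it.
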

Before starting to prove this proposition, remark that in this case, dominant regular weights can be expressed as $r\check{\varpi_i}$ with $r>0$. Let us denote $s_{\alpha_j}$ by $s_j$, and define for $k\geq 0$
\[
W^P(k):=\{w\in W^P, \exists\textrm{ a reduced expression }w=s_{i_p}\ldots s_{i_1}\textrm{ s.t. }s_i\textrm{ occurs at most }k\textrm{ times }\}
\]
We will state and prove the following lemma :
\begin{lemma}
\item[(1)] Let $w\in W^P$ and $\alpha_j\in\Delta$ such that $s_jw\in W^P$. Then 
\[
\langle w\lambda-q,\check{\varpi_i}\rangle\neq \langle s_{\alpha_j}w\lambda-q,\check{\varpi_i}\rangle\Leftrightarrow i=j
\]
\item[(2)] For all $w,w'\in W^P(k)\setminus W^P(k-1)$ and $k\geq 1$,
\[
\langle w\lambda,\check{\varpi_i}\rangle=\langle w'\lambda, \check{\varpi_i}\rangle
\]
\end{lemma}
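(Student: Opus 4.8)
The plan is to treat the two parts in sequence: the first is a direct computation whose only real content is a small root‑theoretic fact, and the second rests on an invariance property of which part~(1) is the infinitesimal shadow.

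For (1), since $q$ and $\lambda=n\varpi_i$ are fixed, I would first reduce the claimed inequality to a statement about $w\lambda$ alone. Writing the simple reflection as $s_j\mu=\mu-\langle\mu,\check\alpha_j\rangle\alpha_j$, the $-q$ terms cancel and one gets
\[
\langle s_{\alpha_j}w\lambda-q,\check\varpi_i\rangle-\langle w\lambda-q,\check\varpi_i\rangle
=-\langle w\lambda,\check\alpha_j\rangle\,\langle\alpha_j,\check\varpi_i\rangle
=-\langle w\lambda,\check\alpha_j\rangle\,\delta_{ij},
\]
using $\langle\alpha_j,\check\varpi_i\rangle=\delta_{ij}$. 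Hence the two quantities differ exactly when $j=i$ and $\langle w\lambda,\check\alpha_i\rangle\neq0$; and since $\langle w\lambda,\check\alpha_i\rangle=n\langle\varpi_i,w^{-1}\check\alpha_i\rangle$ equals $n$ times the $\check\alpha_i$‑coefficient of the coroot of $w^{-1}\alpha_i$, it vanishes precisely when $w^{-1}\alpha_i$ lies in the root subsystem $\Phi_P$ generated by $\Delta_P$. So (1) comes down to the fact that $w^{-1}\alpha_i\notin\Phi_P$ whenever $w$ and $s_iw$ both lie in $W^P$. I would prove this by contradiction using $u\in W^P\Leftrightarrow u(\Phi_P^+)\subset\Phi^+$: if $w^{-1}\alpha_i=\beta\in\Phi_P$ then $\beta$ or $-\beta$ is in $\Phi_P^+$; the case $\beta\in\Phi_P^+$ contradicts $s_iw\in W^P$ (since $s_iw\beta=-\alpha_i$), and the case $-\beta\in\Phi_P^+$ contradicts $w\in W^P$ (since $w(-\beta)=-\alpha_i$).

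For (2), the key observation is that $f(w):=\langle w\varpi_i,\check\varpi_i\rangle$ (so that $\langle w\lambda,\check\varpi_i\rangle=nf(w)$) is invariant under both left and right multiplication by $W_P$. Indeed $\varpi_i$ is fixed by every $s_j$ with $j\neq i$ (because $\langle\varpi_i,\check\alpha_j\rangle=0$), which gives right‑invariance, and $\check\varpi_i$ is fixed by the same reflections, which gives left‑invariance via $\langle s_j(w\varpi_i),\check\varpi_i\rangle=\langle w\varpi_i,s_j\check\varpi_i\rangle$; thus $f$ factors through the double cosets $W_P\backslash W/W_P$. In the same spirit I would check that the minimal number $m(w)$ of occurrences of $s_i$ in a reduced word is bi‑$W_P$‑invariant: prepending or appending a reduced word for an element of $W_P$ (which contains no $s_i$) and then reducing only deletes letters, so cannot increase the $s_i$‑count, giving $m(uwv)=m(w)$ for $u,v\in W_P$. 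Both $f$ and $m$ therefore descend to $W_P\backslash W/W_P$.

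With this in hand, (2) — that $f$ is constant on each stratum $W^P(k)\setminus W^P(k-1)=\{w\in W^P:m(w)=k\}$ — reduces to showing that $m$ takes distinct values on distinct double cosets, so that equal $s_i$‑count forces equal double coset and hence equal $f$. I would attack this by induction on $k$ via part~(1): building $w$ up from $e$ by left simple multiplications that stay in $W^P$, $f$ is unchanged at every $s_j$‑step with $j\neq i$ and \emph{strictly decreases} at every upward $s_i$‑step, because there $\langle v\varpi_i,\check\alpha_i\rangle=\langle\varpi_i,(v^{-1}\alpha_i)\check{}\,\rangle>0$, the root $v^{-1}\alpha_i$ being positive with nonzero $\alpha_i$‑coefficient exactly as in~(1). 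This would simultaneously give the constancy on strata and the strict monotonicity of $f$ in $k$ needed later for Theorem~\ref{instablemax}. The hard part will be the bookkeeping that the value entering the stratum $m=k$ is well defined independently of the path: one must show that the minimal double coset representatives form a chain graded by $m$ — equivalently that the $s_i$‑count alone separates the $L_P$‑orbits — and I expect this to require the specific structure of the maximal parabolic, namely the $\mathbb{Z}$‑grading of $\mathfrak{g}$ by the $\alpha_i$‑coefficient, rather than purely formal Coxeter manipulation.
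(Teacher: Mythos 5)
Part (1) of your proposal is correct and complete, and it is essentially the paper's computation with the one genuinely nontrivial ingredient made explicit: the paper reduces the claim to $(w\lambda,\alpha_i)\neq 0$ and attributes this to very ampleness of ${\cal L}_\lambda$, whereas very ampleness only guarantees $n\neq 0$; the real point is, as you say, that $w^{-1}\alpha_i$ does not lie in the root system of the Levi when both $w$ and $s_iw$ are minimal coset representatives, and your two-case argument via $u\in W^P\Leftrightarrow u(\Phi_P^+)\subset\Phi^+$ proves exactly that. (Minor notational caveat: with the paper's conventions $\Delta_P=\{\alpha_i\}$, so the relevant subsystem is the one generated by $\Delta\setminus\Delta_P$, not by $\Delta_P$.)

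Part (2), however, is not proved, and the gap is the one you flag yourself. The bi-$W_P$-invariance of $f(w)=\langle w\varpi_i,\check{\varpi_i}\rangle$ and of the minimal $s_i$-count $m(w)$ are correct, as is the strict decrease of $f$ at every length-increasing $s_i$-step (the drop there is $\langle \varpi_i,(v^{-1}\alpha_i)^{\vee}\rangle>0$ by positivity of $v^{-1}\alpha_i$ together with your part (1) argument). But that drop depends on the partial product $v$, so two paths with the same number of $s_i$-steps need not produce the same total decrease; constancy of $f$ on $W^P(k)\setminus W^P(k-1)$ therefore does not follow from monotonicity, and your proposed reduction --- that $m$ is injective on $W_P\backslash W/W_P$, equivalently that the double cosets form a chain graded by the $s_i$-count --- is left as an unproved assertion of essentially the same depth as statement (2) itself. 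The paper's own (admittedly terse) argument goes differently: it uses (1) to reduce to the Bruhat-minimal elements of each stratum $W^P(k)\setminus W^P(k-1)$, observes that these can be written as $s_iu$ with $u$ minimal in $W^P(k-1)\setminus W^P(k-2)$, and inducts on $k$; Deodhar's lemma enters only afterwards, in the proof of Theorem \ref{instablemax}. As written, your proposal establishes (1) but leaves the crux of (2) open.
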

\begin{proof}
For (1), take $\mu\in {\cal X}$ and write it as $\mu=\sum m_k\alpha_k$. Then $\langle \mu,\check{\varpi_i}\rangle$ is just the coefficient $m_i$. Hence the claim is equivalent to $(w\lambda, \alpha_i)\neq 0$, which is true since ${\cal L}_\lambda$ is very ample.\\
\indent (2) can be proven by induction on $k$, and by noticing thanks to (1) that if $w''\in W^P(k)\setminus W^P(k-1)$ such that $w''\leq w$, then the equality holds, hence we can reduce to the case of minimal $w, w'$, which have to be written respectively as $s_iu$ and $s_iv$ with $u,v\in W^P(k-1)\setminus W^P(k-2)$.
\end{proof}
We will also use the following lemma, which is due to V. Deodhar (cf. \cite{Ma} lemma 20, and \cite{LS} lemma 4.4 for a different formulation) :
\begin{lemma}
Let $Q$ be a parabolic subgroup of $G$, let $w,v\in W$ such that $v\leq w$ and $v\in W^Q$ (meant as a minimal length representative). Then the set
\[
C(w,v):=\{z\in W, v\leq z\leq w \textnormal{ and } zv^{-1}\in W_Q\}
\]
has a unique maximal element.
\end{lemma}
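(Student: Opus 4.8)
The plan is to recast this as an order-theoretic statement about the parabolic quotient and prove it by induction on $\ell(w)$ using the lifting property of the Bruhat order. Since $v$ is a minimal-length coset representative, every $z$ with $zv^{-1}\in W_Q$ is uniquely $z=\eta v$ with $\eta\in W_Q$ and $\ell(z)=\ell(\eta)+\ell(v)$, so the map $\eta\mapsto\eta v$ is a length-additive order embedding $W_Q\hookrightarrow W$. In particular $\eta v\ge v$ for every $\eta$, so the condition $v\le z$ in the definition of $C(w,v)$ is automatic, and $C(w,v)=\{\eta v:\eta\in I\}$ where $I:=\{\eta\in W_Q:\eta v\le w\}$. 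As $\eta'\le\eta$ forces $\eta'v\le\eta v$, the set $I$ is a lower order ideal of $W_Q$; and a finite poset has a unique maximal element exactly when it has a greatest one. Hence it suffices to show that $I$ is a principal ideal $[e,\eta_0]$ of $W_Q$, for then $\eta_0 v$ is the unique maximal element of $C(w,v)$.

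I would then argue by induction on $\ell(w)$, the base case $w=v$ giving $I=\{e\}$. Throughout I use the lifting property: if $u\le w$ and $sw<w$ for a simple reflection $s$, then $su\le w$, and moreover $u\le sw$ when $su>u$ while $su\le sw$ when $su<u$. For the inductive step, choose a left descent $s$ of $w$. The clean case is $s\in Q$: then left multiplication by $s$ preserves the coset, and the lifting property shows first that $I$ is stable under $\eta\mapsto s\eta$, and then that $I=I'\cup sI'$, where $I'=\{\eta\in W_Q:\eta v\le sw\}$ (one has $v\le sw$, again by lifting, since $sv>v$ for $s\in Q$, so the inductive hypothesis applies and $I'=[e,\eta_1]$). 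A one-line subword argument shows the $s$-saturation of a principal ideal is again principal, with top $s\eta_1$ or $\eta_1$ according as $s\eta_1>\eta_1$ or $s\eta_1<\eta_1$; this closes the clean case.

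The main obstacle is the case where $w$ has no descent in $Q$ --- equivalently, after also trying right descents, where $w$ is reduced on both sides with respect to $W_Q$. Now the peeled descent $s$ lies outside $Q$, so left multiplication by $s$ moves the coset and the saturation trick breaks down. If $s$ is not a descent of $v$ one can still finish: the standard fact that a simple reflection outside $Q$ is a descent of $\eta v$ iff it is a descent of $v$ (for $\eta\in W_Q$ and $v$ of minimal length) gives $s(\eta v)>\eta v$ for every $\eta\in I$, whence $\eta v\le sw$ by lifting and $I=I'$, so induction applies. The genuinely hard situation is the one in which every descent of $w$ is simultaneously a descent of $v$; this is exactly the delicate coupling between the Bruhat order and the parabolic structure that the lemma encodes. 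I expect to resolve it by a secondary induction on $\ell(w)-\ell(v)$, replacing $v$ by $sv$, renormalizing to the minimal representative of its coset, and carefully comparing the new ideal with $I$. The difficulty there is the bookkeeping of coset representatives rather than any single decisive step, and this is where I would expect to spend most of the effort.
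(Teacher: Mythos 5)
First, a contextual remark: the paper does not prove this lemma at all --- it is quoted as Deodhar's lemma with pointers to \cite{Ma} (Lemma 20) and \cite{LS} (Lemma 4.4) --- so there is no in-paper argument to compare against, and your proposal must stand on its own. Your reduction is correct as far as it goes: writing $C(w,v)=\{\eta v:\eta\in I\}$ with $I=\{\eta\in W_Q:\eta v\le w\}$, observing via length-additivity $\ell(\eta v)=\ell(\eta)+\ell(v)$ and the subword criterion that $I$ is a lower order ideal of $W_Q$, and reducing the lemma to showing that $I$ is principal, is all sound. The two cases you complete are also correct: for a left descent $s$ of $w$ with $s\in Q$, lifting gives $I=I'\cup sI'$ with $I'$ principal by induction, and the $s$-saturation of a principal ideal is principal; for $s\notin Q$ with $sv>v$, the fact that $\eta^{-1}\alpha_s\in\Phi^+\setminus\Phi^+_Q$ for $\eta\in W_Q$ forces $s\eta v>\eta v$, whence $I=I'$. (A small inaccuracy: the ``standard fact'' you invoke is false as an equivalence --- take $W=S_3$, $Q=\{s_1\}$, $v=s_2$, $\eta=s_1$, $s=s_2$: then $s$ is a descent of $v$ but not of $\eta v=s_1s_2$ --- but you only ever use the true implication $sv>v\Rightarrow s\eta v>\eta v$, so this does not damage those cases.)

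The genuine gap is the third case, which you explicitly leave open: every left descent $s$ of $w$ lies outside $Q$ and satisfies $sv<v$. This case is nonvacuous (e.g.\ $W=S_4$, $Q=\{s_2\}$, $w=s_1s_2s_3$, $v=s_1$) and it is exactly where the content of Deodhar's lemma lives; without it you have no proof. Moreover, the sketched repair --- replace $v$ by $sv$ and renormalize to the minimal representative of its coset --- does not close the induction, because $sW_Qv\ne W_Q(sv)$ in general. Concretely, splitting $I$ into $A=\{\eta:s\eta v>\eta v\}$ and $B=\{\eta:s\eta v<\eta v\}$, lifting converts the condition $\eta v\le w$ into $\eta v\le sw$ on $A$ but into $s\eta v\le sw$ on $B$, and the elements $s\eta v$ for $\eta\in B$ are scattered over several right $W_Q$-cosets (indexed by $W_Q/(W_Q\cap sW_Qs)$), so the inductive hypothesis, which concerns a single coset $W_Qv'$, does not apply to them. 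This is not bookkeeping; a genuinely different idea (or simply the argument from Deodhar's paper, as the present paper does by citation) is required, and as written the proposal is an incomplete proof.
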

We will now prove Theorem \ref{instablemax}.
\begin{proof}
The point (2) of previous lemma implies that $S_d$ is actually a $W^P(k)$ for some $k$, hence it remains to show that it has a unique maximal element. Remark that maximal elements of $W^P(k)$ are actually in $W^P(k)\setminus W^P(k-1)$, and that if $w\in W^P$, then $s_jw\in W^P$ iff $\alpha_j\in w(\Phi^+\setminus \Phi_P^+)$. 
Let $k>0$, and let $w$ be a maximal element of $W^P(k)$. We will assume the following claim to be true : $w$ can be written as $w=us_iv$ with $u\in W_P$ and $v$ maximal in $W^P(k-1)$. Then Deodhar's lemma gives us by induction the unicity of $w$ : if we denote by $v$ the unique maximum of $W^P(k-1)$, then $w$ is the maximum of $C(w_0,s_iv)$.\\
\indent
It remains now to prove the claim. Let $w\in W^P(k)$ be maximal, and write it as $w=us_iv$ for some $v\in W^P(k-1)$ and $u\in W_P$. Let $v'\in W^P(k-1)$ be maximal such that $v\leq v'$. Then we have $s_iv\leq s_iv'$. Deodhar's lemma for $C(w_{0,P},s_iv')$ gives us a unique $u'\in W_P$ such that $z=u's_iv'$ is maximal in $W^P(k)$, and this expression is reduced. Applying Deodhar's lemma again for $C(w_{0,P},s_iv)$ shows that $z=w$, which shows the claim.
\end{proof}
\begin{remark} If we denote by $X^{us}_{ad}$ the set of unstable points relatively to some antidominant regular one-parameter subgroup, then the theorem implies that $X^{us}_{ad}=w_{0,P}\overline{X_w}$ for some $w\in W^P$. Moreover, since the intersection of regular chambers closures is reduced to 0, $X^{us}_d$ and $X^{us}_{ad}$ are disjointed. Set 
\[
M^P:=\{w\in W^P, \exists k \textrm{ s.t. }w \textrm{ maximal in }W^P(k)\}
\]
and denote its elements by $w_k$ where $k$ corresponds to the $W^P(k)$ in which they are maximal. Let $\overline{X_{w_{-1}}}=\emptyset$ and let $k_{\max}$ be the unique integer such that $w_{0,P}=w_{k_{\max}}$. Then we obtain the following corollary :
\end{remark}
\begin{corollary}
There exists $-1\leq k,k'\leq k_{\max}$ such that $X^{us}(l)=\overline{X_{w_k}}\sqcup w_{0,P}\overline{X_{w_{k'}}}$.
\end{corollary}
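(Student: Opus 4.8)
The plan is to feed $\hat W=\{1,w_{0,P}\}$ into Theorem \ref{unstableflags} and to recognize the two resulting pieces as $X^{us}_d$ and $X^{us}_{ad}$. First I would write
\[
X^{us}(l)=\Big(\bigcup_{w\in W(1,\lambda,q)}X_w\Big)\ \cup\ \Big(\bigcup_{w\in W(w_{0,P},\lambda,q)}w_{0,P}^{-1}X_w\Big),
\]
and observe that the first set is exactly $X^{us}_d$ while the second is $X^{us}_{ad}$: in the one-dimensional lattice $\hat{\cal X}^*=\mathbb Z\check\varpi_i$ the only regular directions are the positive and negative multiples of $\check\varpi_i$, the $\hat w=1$ cells arising from the dominant subgroups $r\check\varpi_i$ and the $\hat w=w_{0,P}$ cells (after the translation in the proof of Theorem \ref{unstablefullflags}) from the antidominant subgroups $-r\check\varpi_i$.

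Next I would identify the dominant piece. Since $W(1,\lambda,q)=S_d$, Theorem \ref{instablemax} applies: its proof shows $S_d=W^P(k)$ for some $k$ (with the convention $k=-1$ when $S_d=\emptyset$) and that this set has a unique maximal element $w_k$. As $W^P(k)$ is a lower order ideal for the Bruhat order (a reduced subword uses $s_i$ no more often than the word it refines) with unique maximum $w_k$, it equals $\{v\in W^P:v\leq w_k\}$, so $\bigcup_{w\in W(1,\lambda,q)}X_w=\overline{X_{w_k}}$.

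For the antidominant piece I would argue that $W(w_{0,P},\lambda,q)$ has the same shape. By point (1) of the lemma the value $\langle w\lambda,\check\varpi_i\rangle$ changes precisely when the reflection one adds is $s_i$, and by point (2) it depends only on the level $k$ of $w$ in the filtration $W^P(\cdot)$; hence $w\mapsto\langle w\lambda,\check\varpi_i\rangle$ is strictly monotone in $k$. Since the defining inequality $\langle w\lambda-w_{0,P}q,\xi\rangle>0$ for $\xi=r\check\varpi_i$ is again a threshold condition $\langle w\lambda,\check\varpi_i\rangle>\langle w_{0,P}q,\check\varpi_i\rangle$ on this monotone function, $W(w_{0,P},\lambda,q)$ is once more of the form $W^P(k')$, and the argument of Theorem \ref{instablemax} (which in fact shows every $W^P(k')$ has a unique maximal element) provides $w_{k'}$. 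Exactly as above this gives $\bigcup_{w\in W(w_{0,P},\lambda,q)}X_w=\overline{X_{w_{k'}}}$, so using $w_{0,P}^{-1}=w_{0,P}$ the second piece equals $w_{0,P}\overline{X_{w_{k'}}}$.

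Finally I would check the union is disjoint. Projecting $x=[v]$ to its $\hat T$-weights makes $\Pi(x)$ an interval in $\mathbb R$; instability for $+\check\varpi_i$ forces the test character $p$ to lie strictly beyond one endpoint of $\Pi(x)$, while instability for $-\check\varpi_i$ forces it strictly beyond the other, so the two cannot occur together. Equivalently, the closures of the dominant and antidominant regular chambers of $\hat{\cal X}^*$ meet only in $0$, whence $X^{us}_d\cap X^{us}_{ad}=\emptyset$ and $X^{us}(l)=\overline{X_{w_k}}\sqcup w_{0,P}\overline{X_{w_{k'}}}$ with $-1\leq k,k'\leq k_{\max}$. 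The genuinely delicate points are this disjointness and the verification that the antidominant index set is again a $W^P(k')$; granting these, the corollary is bookkeeping on top of Theorems \ref{unstableflags} and \ref{instablemax}.
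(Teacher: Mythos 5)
Your argument is correct and follows essentially the same route as the paper, which derives this corollary from Theorem \ref{instablemax} applied to the dominant and (via the translation by $w_{0,P}$) antidominant sides, together with the disjointness of $X^{us}_d$ and $X^{us}_{ad}$ noted in the remark preceding the statement. You merely make explicit two points the paper leaves implicit --- that $W(w_{0,P},\lambda,q)$ is again a set of the form $W^P(k')$ because $\langle w\lambda,\check\varpi_i\rangle$ is constant on each level and strictly decreasing in the level, and that the interval $\Pi(x)\subset\mathbb{R}$ cannot lie strictly on both sides of the linearization character --- both of which are sound.
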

\begin{remark}
The maximal element of $W^P(k)$ can be explicited thanks to the following product $\star$ defined for $w\in W$ and $\alpha_j\in\Delta$ by
\[
s_j\star w:=\left\lbrace\begin{array}{rr}s_iw&\textrm{ if }l(s_iw)>l(w)\\w&\textrm{ else}\end{array}\right.
\]
This product naturally appears as the limit of product in the Hecke algebra of $W$ when the parameter $q\to 0$, and this product is associative. This product also occurs when looking at the image product of preimages of Schubert varieties : for $w, w'\in W$, the image of the map
\[
\overline{BwB}\times^{B}\overline{Bw'B}/B\to G/B
\]
is projective, $B$-invariant and irreducible, hence it is a Schubert variety, which is $\overline{X_{w\star w'}}$. More detail about this $\star$-product can be found in \cite{BM}. Then if we denote by $z$ the longest word in $W_P$, the maximal element of $W^P(k)$ is given by $(z\star s_i)^{\star k}$. We should also remark that the unicity of the maximum of $W^P(k)\setminus W^P(k-1)$ implies he unicity of the minimum of $W^P(k_{\max}-k+1)\setminus W^P(k_{\max}-k)$, which will be denoted by $v_{k_{\max}-k+1}$.
\end{remark}
Thanks to the previous result, we know that the possible dimension of $X^{us}$ lies in the set $D_P:=\{l(w),w\in M^P\}$, which uniquely depends on the root system of $G$.\\
\indent When the quotient is not to bad, knowing the codimension of $X^{us}(l)$ implies the vanishing of some cohomology groups of invertible sheaves of the quotient. Here, not to bad means that $X$ is Cohen-Macaulay, that the codimension of $X^{us}(l)$ is at least two, and that this quotient is geometric (ie. $X^{ss}(l)=X^{s}(l)$). Let $l\in C^{\hat{T}}(X)$, let $\pi:X^{ss}(l)\to Y$ be the quotient, and let $j:X^{ss}(l)\to X$ the inclusion. In that case, invertible sheaves ${\cal M}$ on $Y$ can be lifted to a $\hat{T}$-linearized invertible sheaf $\tilde{\cal M}$ on $X$ such that ${\cal M}=(\pi_*j^*\tilde{\cal M})^{\hat{T}}$. Note that this makes sense, since open subsets of $Y$ lift to $\hat{T}$-invariant open subsets of $X^{ss}(l)$. The condition of $X$ being Cohen-Macaulay allows us to say that for any locally free sheaf ${\cal F}$ on $X$, $H^i(X,{\cal F})=H^i(X^{ss}(l),{\cal F})$ for $i<\textrm{codim}(X^{us}(l))-1$ thanks to the localization long exact sequence. More details about these generalities can be found in \cite{De} 4.1 and 4.4.\\
\indent Since in our case $X$ is a flag variety, cohomology groups of its invertible sheaves are fully understood thanks to Borel-Weil-Bott theorem (cf. \cite{CS} for general $G/P$). Since $H^i(Y,{\cal M})=H^i(X^{ss}(l),\tilde{\cal M})^{\hat{T}}$ for ${\cal M}\in\textrm{Pic}(Y)$, we get the following :
\begin{proposition}\label{vanish}
Let $l\in C^{\hat{T}}(X)$ such that the quotient $X^{ss}(l)\to Y$ is geometric. Then for any invertible sheaf ${\cal M}\in \textnormal{Pic}(Y)$, $H^i(Y,{\cal M})=0$ for $0<i<\textnormal{codim}(X^{us}(l))-1$ and if $Y$ is Gorenstein, for $\dim(X)>i>\dim(X)-\textnormal{codim}(X^{us}(l))+1$.
\end{proposition}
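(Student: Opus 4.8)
The plan is to combine the two facts recalled just above---the localization isomorphism coming from Cohen--Macaulayness and the identification $H^i(Y,{\cal M})=H^i(X^{ss}(l),\tilde{\cal M})^{\hat T}$---and then to feed the outcome into Borel--Weil--Bott. Write $c=\textnormal{codim}(X^{us}(l))$ and fix a $\hat T$-linearized lift $\tilde{\cal M}$ of ${\cal M}$, whose underlying invertible sheaf is some ${\cal L}_\mu$ on $X=G/P$. First I would observe that the localization long exact sequence
\[
\cdots\to H^i_{X^{us}}(X,\tilde{\cal M})\to H^i(X,\tilde{\cal M})\to H^i(X^{ss}(l),\tilde{\cal M})\to H^{i+1}_{X^{us}}(X,\tilde{\cal M})\to\cdots
\]
is $\hat T$-equivariant, that taking $\hat T$-invariants is exact since $\hat T$ is linearly reductive, and that $H^j_{X^{us}}(X,\tilde{\cal M})=0$ for $j<c$ because $X$ is Cohen--Macaulay and $\tilde{\cal M}$ is locally free.

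For the first range, fix $0<i<c-1$. Then $H^i_{X^{us}}$ and $H^{i+1}_{X^{us}}$ both vanish, so the restriction $H^i(X,\tilde{\cal M})\to H^i(X^{ss}(l),\tilde{\cal M})$ is a $\hat T$-equivariant isomorphism; taking invariants and using $H^i(Y,{\cal M})=H^i(X^{ss}(l),\tilde{\cal M})^{\hat T}$ gives
\[
H^i(Y,{\cal M})\cong H^i(X,\tilde{\cal M})^{\hat T}.
\]
It then remains to kill the right-hand side for $i>0$. Here I would invoke Borel--Weil--Bott on the flag variety $X$: the cohomology $H^\bullet(X,{\cal L}_\mu)$ is concentrated in a single degree, and in any positive degree it is, when nonzero, a single irreducible $G$-module; the invariant functor $(-)^{\hat T}$ then extracts one weight space of that module for the one-parameter subtorus $\hat T$, computed with the twist imposed by the descent condition ${\cal M}=(\pi_*j^*\tilde{\cal M})^{\hat T}$.

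For the second range I would use Serre duality. Since $Y$ is projective and Gorenstein, $\omega_Y$ is invertible and $H^i(Y,{\cal M})\cong H^{\dim Y-i}(Y,\omega_Y\otimes{\cal M}^{-1})^\vee$. Applying the vanishing just obtained to the invertible sheaf $\omega_Y\otimes{\cal M}^{-1}$ annihilates the right-hand side as soon as $\dim Y-i$ lies in the first range $(0,c-1)$, that is for $\dim Y-c+1<i<\dim Y$. This top band is precisely the Serre dual of the bottom one; matching it against the range written with $\dim X$ in the statement is a reindexing that I would have to carry out carefully, since $\dim Y=\dim X-\dim\hat T$.

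I expect the main obstacle to be the vanishing $H^i(X,\tilde{\cal M})^{\hat T}=0$ for $0<i<c-1$. Borel--Weil--Bott pins down the single degree of concentration and the $G$-module appearing there, but a nonzero module can perfectly well have a nonzero $\hat T$-weight space, so the argument cannot stop at ``the cohomology is irreducible''. The clean way through, I believe, is to reduce to the cases where $\mu$ is dominant or $\mu+\rho$ is singular---so that $H^{>0}(X,{\cal L}_\mu)$ vanishes outright---and to show that a sheaf ${\cal M}$ descending to $Y$ either cannot have its underlying $\mu$ in an intermediate Weyl chamber in the relevant degree range, or contributes no $\hat T$-fixed vector once the descent twist is accounted for. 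Pinning down that twist, together with the $\dim X$-versus-$\dim Y$ bookkeeping in the Serre-duality step, is where the real care is needed; the localization and reductivity inputs are otherwise formal.
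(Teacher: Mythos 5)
Your skeleton is the same as the paper's (localization sequence plus Cohen--Macaulayness to get $H^i(X,\tilde{\cal M})\cong H^i(X^{ss}(l),\tilde{\cal M})$ for $i<c-1$, take $\hat T$-invariants, then Serre duality for the top band), but you leave the central step --- the vanishing of $H^i(X,\tilde{\cal M})^{\hat T}$ for $0<i<c-1$ --- explicitly open, and the route you sketch for it is not the right one. You worry that Borel--Weil--Bott might place a nonzero irreducible $G$-module in an intermediate degree whose $\hat T$-weight spaces would then have to be analyzed via the descent twist. This situation never occurs here, and the reason is the standing hypothesis of Section 4 that $P$ is a \emph{maximal} parabolic: $\textrm{Pic}(X)=\mathbb{Z}\varpi_i$, so the underlying sheaf of any lift $\tilde{\cal M}$ is ${\cal L}_{n\varpi_i}$ for some $n\in\mathbb{Z}$. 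Writing $\omega_X={\cal L}_{-c_X\varpi_i}$, one has: for $n\geq 0$ the cohomology is concentrated in degree $0$; for $-c_X<n<0$ it vanishes entirely (Kodaira applied to $\omega_X\otimes{\cal L}_{(n+c_X)\varpi_i}$ kills $H^{>0}$, and $H^0=0$ since $n<0$); for $n\leq -c_X$ Serre duality puts it in top degree. Hence $H^i(X,\tilde{\cal M})=0$ for all $0<i<\dim(X)$ \emph{before} taking invariants, there is no ``intermediate Weyl chamber'' case to exclude, and no analysis of the $\hat T$-action on the cohomology is needed. This is precisely the first sentence of the paper's proof, and without invoking the maximality of $P$ your argument does not close: for a non-maximal $P$ a line bundle genuinely can have its cohomology concentrated in an intermediate degree, and your proposed reduction ``to the cases where $\mu$ is dominant or $\mu+\rho$ is singular'' is not justified.

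Two smaller points. First, your observation that taking $\hat T$-invariants is exact and that the localization sequence is equivariant is correct but, given the above, superfluous for the first band. Second, your Serre-duality step is performed on $Y$ and lands in the range $\dim Y-c+1<i<\dim Y$ with $\dim Y=\dim X-1$; you rightly flag that this does not literally coincide with the range $\dim(X)-c+1<i<\dim(X)$ in the statement. The paper glosses over this reindexing as well, so you are in good company, but ``I would have to carry this out carefully'' is not yet a proof of the stated inequalities; you should either pin down the shift or note that the discrepancy only concerns the single degree $i=\dim Y$, which requires a separate argument (it is not covered by dualizing the first band).
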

\begin{proof}
Since $P$ is a maximal parabolic subgroup of $G$, for any invertible sheaf $\tilde{\cal M}$ on $X$, $H^*(X,\tilde{\cal M})\neq 0$ implies $i=0$ or $\dim(X)$. To be more precise, $H^*(X,\tilde{\cal M})$ is concentrated in degree 0 if $\tilde{\cal M}$ is in the closure $\overline{A}$ of the ample cone, concentrated in top degree if $\omega_X\otimes \tilde{\cal M}^{\otimes -1}$ lies in $\overline{A}$, and 0 in the other cases. The vanishing in the lase case can be shown by using Kodaira vanishing theorem saying that $H^i(X,{\cal F}\otimes \omega_X)=0$ for $i>0$ and ${\cal F}$ ample (or equivalently, that $H^i(X,{\cal F}^{\otimes -1})=0$ for $i<\dim(X)$), saying us that $H^*(X,\tilde{\cal M})$ is concentrated in top degree. But using Serre duality, it is also concentrated in degree 0, hence it is 0.\\
\indent The claim for $0<i<\textrm{codim}(X^{us}(l))-1$ follows from the previous discussion, and it is given by Serre duality for $\dim(X)>i>\dim(X)-\textrm{codim}(X^{us}(l))+1$. Note that when $\textrm{codim}(X^{us}(l))<2$, this statement is empty.
\end{proof}
\begin{remark}
We already know from \cite{HR} that $Y$ is Cohen-Macaulay, but it is not necessarly factorial. Following the criterion for factoriality given in \cite{Dr} 8.1.2, we get that if there exists an open set $X_0\subset X^{s}$ on which $\hat{T}$ acts trivially such that $X^{ss}\setminus X_0$ is of codimension at least two, then $Y$ is Gorenstein iff every $\hat{T}$-linearized invertible sheaf on $X^{ss}$ descends to $Y$ (ie. it is the restriction to $X^{ss}$ of a $\tilde{\cal M}$ for some ${\cal M}\in \textrm{Pic}(Y)$).
\end{remark}
Saying that the quotient $X^{ss}\to Y$ is geometric is equivalent to say that all strict inequalities defining $X^{us}(l)$ can be taken as large inequalities. Hence this condition can be restated as :
\[
\forall w\in M^P, \langle w\lambda -q,\check{\varpi_i}\rangle\neq 0
\]
\begin{example}
The following examples are the one that led me to look at this problem of unstable points for torus actions on flag varieties, namely the quotients $X//\mathbb{C}^*$ in one of the following cases :
\begin{itemize}
\item[1)] $G=\textrm{SL}_6$, $\alpha_i=\alpha_3$ ;
\item[2)] $G=\textrm{Sp}_6$, $\alpha_i=\alpha_3$ ;
\item[3)] $G=\textrm{SO}_{12}$, $\alpha_i=\alpha_6$ ;
\item[4)] $G$ of type $E_7$, $\alpha_i=\alpha_7$ ;
\end{itemize}
all these cases being considered with trivial linearizations. It is shown in \cite{De} 3.2 that these quotients are exactly the wonderful compactifications of symmetric spaces whose restricted root system is of type $A_2$. Then the reduced expressions for $w_{0,P}$ such that maximal suffixes of this expression having $k$ occurrences of $s_i$ is exactly $w_k$ are respectively :
\begin{itemize}
\item[1)] $s_{342312543}$ ;
\item[2)] $s_{323123}$ ;
\item[3)] $s_{645346234512346}$ ;
\item[4)] $s_{765432456713452645341234567}$.
\end{itemize}
Thus the codimensions of unstable $X^{us}$ are respectively 4, 3, 6 and 10. This gives for free the vanishing of cohomology groups $H^i(Y,{\cal M})$ for invertible sheaves on ${\cal M}$ for $i$ in the range given by Proposition \ref{vanish}. This was already proven in \cite{Tc1} and \cite{Tc2} but this was done using local cohomology and Cousin complexes machinery.
\end{example}

\subsection{Variation of quotients}\label{sec4-2}
We will say a few words about variation of GIT quotients in our case. Since $P$ is a maximal parabolic, its Picard group is reduced to $\mathbb{Z}$, and once we fix a very ample invertible sheaf ${\cal L}_{\lambda}$, a ray in $\textrm{NS}^{\hat{T}}(X)_{\mathbb{Q}}$ generated by a $\hat{T}$-linearized ample invertible sheaf can be identified with a fractional character $q\in\hat{\cal X}_{\mathbb{Q}}\simeq \mathbb{Q}$ having semi-stable points. By Theorem \ref{unstableflags}, having semi-stable points is equivalent to
\[
\exists w,w'\in W^P, \langle w\lambda-q,\check{\varpi_i}\rangle \leq 0 \textrm{ and } \langle w'\lambda+q,\check{\varpi_i}\rangle \leq 0
\]
which is equivalent to the same condition with $w=w'=w_{0,P}$, ie.
\[
\langle -\lambda,\check{\varpi_i}\rangle \leq \langle q,\varpi_i\rangle \leq \langle \lambda,\check{\varpi_i}\rangle
\]
Let $q_{\max}:=\langle \lambda,\check{\varpi_i}\rangle$. The partition of $C^{\hat{T}}(X)$ in wall and chambers will be equivalent to its induced partition of $[-q_{\max},q_{\max}]$. Remark that here, walls and cells coincide, since they both correspond to points in $[-q_{\max},q_{\max}]$. The walls will be given by $q$ for which the quotient is not geometric, ie. $\exists w\in W^P$ such that
\begin{equation}\label{doublecancel}
\langle w\lambda-q,\check{\varpi_i}\rangle = 0 \textrm{ or } \langle w\lambda+q,\check{\varpi_i}\rangle = 0
\end{equation}
but the first condition being satisfied for $w$ implies the second condition being satisfied for $ww_{0,P}$. Hence for $q$ in a wall, there are both semi-stable points respectively to dominant one-parameter subgroups, and respectively to antidominant ones, which are not stable. We now can show the following :
\begin{proposition}\label{uspointsmaxpara}
There are exactly $k_{\max}$ chambers having semi-stable points, and they correspond to the $C(k):=]\langle w_{k+1}\lambda,\check{\varpi_i}\rangle , \langle w_{k}\lambda,\check{\varpi_i}\rangle [$ for $0\leq k \leq k_{\max}-1$, and for $q\in C(k)$, $X^{us}(q)=\overline{X_{w_k}}\sqcup w_{0,P}\overline{X_{w_{k_{\max}-1-k}}}$
\end{proposition}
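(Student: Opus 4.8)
The plan is to reduce the whole statement to the one–dimensional combinatorics of the numbers $a_w:=\langle w\lambda,\check{\varpi_i}\rangle$ and then to read off the unstable locus from Theorem \ref{unstableflags}. Since $\lambda=n\varpi_i$ is fixed by $W_P$, the quantity $a_w$ depends only on the coset $wW_P\in W^P$, and by part (2) of the first lemma it is constant on each layer $W^P(k)\setminus W^P(k-1)$; write $a(k)$ for this common value. The first thing I would establish is the strict monotonicity
\[
q_{\max}=a(0)>a(1)>\cdots>a(k_{\max})=-q_{\max}.
\]
The equalities at the ends come from $a(0)=\langle\lambda,\check{\varpi_i}\rangle$ and, using $w_{0,P}\check{\varpi_i}=-\check{\varpi_i}$, from $a(k_{\max})=\langle\lambda,w_{0,P}\check{\varpi_i}\rangle=-q_{\max}$. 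For the inequalities I would pass from $w_{k-1}$ to $w_k$ by appending a single $s_i$ on the right, say $w_k=vs_i$ with $\ell(vs_i)>\ell(v)$ and $a_v=a(k-1)$ (as permitted by the $\star$–product formula $w_k=(z\star s_i)^{\star k}$); then $a(k)-a(k-1)=-\langle\lambda,\check{\alpha_i}\rangle\,\langle v\alpha_i,\check{\varpi_i}\rangle$, which is strictly negative because $\langle\lambda,\check{\alpha_i}\rangle=n>0$, because $v\alpha_i$ is a positive root, and because the difference is nonzero by part (1) of the lemma. Finally the double–cancellation relation recalled around (\ref{doublecancel}) gives the symmetry $a(k)=-a(k_{\max}-k)$.

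With these facts the wall–and–chamber picture of $[-q_{\max},q_{\max}]$ is immediate. A value $\bar q:=\langle q,\check{\varpi_i}\rangle$ lies on a wall exactly when $\bar q\in\{a(k)\}\cup\{-a(k)\}$, and by the symmetry these two sets coincide, so the walls are precisely the $k_{\max}+1$ points $a(0)>\cdots>a(k_{\max})$. Their complement in $[-q_{\max},q_{\max}]$ is the union of the $k_{\max}$ open intervals $C(k)=\,]a(k+1),a(k)[$ for $0\le k\le k_{\max}-1$, each of which lies in the interior of the semi–stable range and is a genuine chamber: no $w$ satisfies a cancellation condition on $C(k)$, so the quotient is geometric and $X^{us}(q)$ is constant there. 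This already yields the count of exactly $k_{\max}$ chambers carrying semi–stable points.

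It remains to identify $X^{us}(q)$ for $q\in C(k)$ using $\hat W=\{1,w_{0,P}\}$ in Theorem \ref{unstableflags}. For $\hat w=1$ the set $W(1,\lambda,q)$ is $\{w:a_w>\bar q\}$, which equals $W^P(k)$ because $a(k)>\bar q>a(k+1)$; its cells fill up the Schubert variety $\overline{X_{w_k}}$. For $\hat w=w_{0,P}$ one computes $\langle w_{0,P}q,\check{\varpi_i}\rangle=\langle q,-\check{\varpi_i}\rangle=-\bar q$, so $W(w_{0,P},\lambda,q)=\{w:a_w>-\bar q\}$; applying the symmetry $a(k)=-a(k_{\max}-k)$ turns $a(k)>\bar q>a(k+1)$ into $a(k_{\max}-1-k)>-\bar q>a(k_{\max}-k)$, whence this set is $W^P(k_{\max}-1-k)$ and contributes $w_{0,P}^{-1}\overline{X_{w_{k_{\max}-1-k}}}=w_{0,P}\overline{X_{w_{k_{\max}-1-k}}}$. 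These two pieces are exactly $X^{us}_d$ and $X^{us}_{ad}$, hence disjoint by the remark following Theorem \ref{instablemax}, giving $X^{us}(q)=\overline{X_{w_k}}\sqcup w_{0,P}\overline{X_{w_{k_{\max}-1-k}}}$. The main obstacle is the strict monotonicity of the $a(k)$: everything else is bookkeeping, but without control of the sign of each $s_i$–crossing one cannot guarantee that consecutive walls are labelled by consecutive $k$, nor land on the index $k_{\max}-1-k$ rather than $k_{\max}-k$.
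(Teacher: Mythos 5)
Your argument is correct and arrives at the right answer, but it is organized quite differently from the paper's proof, so a comparison is worthwhile. The paper's proof is an induction over wall--crossings: using the relation (\ref{doublecancel}) it argues that passing to an adjacent chamber must change \emph{both} $X^{us}_{d}$ and $X^{us}_{ad}$, so the two Schubert indices shift by $\pm 1$ in opposite directions, and the labelling is then pinned down by the boundary condition $X^{us}_{d}(q)=\emptyset$, $X^{us}_{ad}(q)=X$ for $q\gg 0$; the ordering of the walls and the count of chambers are declared ``obvious''. You instead compute everything in closed form: you prove the strict monotonicity $a(0)>a(1)>\cdots>a(k_{\max})$ of the wall values $a(k)=\langle w_k\lambda,\check{\varpi_i}\rangle$ and the symmetry $a(k)=-a(k_{\max}-k)$, and then identify $W(1,\lambda,q)\cap W^P=W^P(k)$ and $W(w_{0,P},\lambda,q)\cap W^P=W^P(k_{\max}-1-k)$ directly from Theorem \ref{unstableflags}. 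What your route buys is that it makes explicit the strict monotonicity which the paper's ``obvious'' chamber description silently requires (without it one could not even assert that the intervals $C(k)$ are nonempty and pairwise distinct, nor that consecutive walls carry consecutive indices); what the paper's route buys is that it avoids proving the symmetry $a(k)=-a(k_{\max}-k)$ head-on, extracting it instead from the wall--crossing bookkeeping. Two small points in your write-up deserve more care, though neither is fatal. First, your monotonicity step appends $s_i$ on the \emph{right}, whereas part (1) of the lemma is stated for left multiplication; it is cleaner to use the structural claim from the proof of Theorem \ref{instablemax}, namely $w_k=us_iw_{k-1}$ with $u\in W_P$ and $s_iw_{k-1}\in W^P$, which gives $a(k)=a(k-1)-\langle w_{k-1}\lambda,\check{\alpha_i}\rangle$ with the bracket strictly positive because $w_{k-1}^{-1}\alpha_i$ is a positive root outside $\Phi_P$. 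Second, your assertion $a_v=a(k-1)$ for $v=w_ks_i$ needs the (easy, subword-based) check that the minimal representative of $vW_P$ lies in $W^P(k-1)\setminus W^P(k-2)$ rather than in a lower layer; otherwise the telescoping comparison $a(k)<a(k-1)$ is not immediate. Both gaps are routine to fill and do not affect the conclusion.
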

\begin{proof}
The description of chambers with semi-stable points is obvious since the walls are exactly the $\{\langle w_{k}\lambda,\check{\varpi_i}\rangle\}$. The equation \ref{doublecancel} implies that if $X^{us}(q)=\overline{X_{w_k}}\sqcup w_{0,P}\overline{X_{w_{k'}}}$ for $q$ in a chamber, and if $q'$ lies in a different adjacent chamber, then $X^{us}_{d}(q)\neq X^{us}_{d}(q')$ and $X^{us}_{ad}(q)\neq X^{us}_{ad}(q')$. Thus we get
\[
X^{us}(q')=\overline{X_{w_{k+1}}}\sqcup w_{0,P}\overline{X_{w_{k'-1}}} \textrm{ or } X^{us}(q')=\overline{X_{w_{k-1}}}\sqcup w_{0,P}\overline{X_{w_{k'+1}}}
\]
the first case being equivalent to $q-q'>0$. Since for $q>>0$ we have $X^{us}_{d}(q)=\emptyset$ and $X^{us}_{ad}(q)=X$, the description of $X^{us}(q)$ for $q\in C(k)$ follows.
\end{proof}
\begin{remark}
The maps $f_{F,F'}$ from the Proposition \ref{inverselimit} give the collection of nonempty GIT quotients a structure of inverse system, and one can consider its inverse limit. Having such a nice description of unstable points of GIT quotients of $X$ by $\check{\varpi_i}$, an interesting question would be to have a nice description of this limit $\underline{X//\check{\varpi_i}}$. Such limits can be interesting to look at : for example, let $V$ be a $n$-dimensional $\mathbb{C}$-vector space. M. Thaddeus showed in \cite{Th2} that one can in such a way obtain the moduli spaces of complete collineations from $V$ to $V$ of maximal ranks, of complete quadrics in $\mathbb{P}(V)$, or of complete skew forms on $V$.
\end{remark}

\section*{Acknowledgements}
I would like to thank O. Mathieu and M. Pelletier for useful discussions.


\bibliography{mybibfile}

\end{document}